\newtheorem{theorem}{Theorem}[section]
\newtheorem{lemma}[theorem]{Lemma}
\newtheorem{remark}[theorem]{Remark}
\newtheorem{corollary}[theorem]{Corollary}
\def\to{\rightarrow}
\def\f{\mathfrak}
\def\m{\mathbb}
\def\b{\mathbf}
\def\r{\mathrm}
\def\ot{\otimes}
\begin{document}
\baselineskip15pt
\title[Noncommutative mapping schemes]{On the structure of noncommutative\\ mapping schemes}
\author[M.M. Sadr]{Maysam Maysami Sadr}
\address{Department of Mathematics\\
Institute for Advanced Studies in Basic Sciences\\
P.O. Box 45195-1159, Zanjan 45137-66731, Iran}
\email{sadr@iasbs.ac.ir}
\subjclass[2010]{14A22; 16S38; 16T20}
\keywords{Noncommutative algebraic geometry; affine scheme; quantum group}
\begin{abstract}
The following three types of objects are considered in a dual functorial formalism:
(i) ind-scheme of mappings between two schemes, (ii) for a quantum group $G$, ind-scheme of $G$-mappings between two $G$-schemes, and
(iii) ind-scheme of group homomorphisms between two quantum group. By schemes and quantum groups here we mean objects which are
respectively dual to unital associative algebras and Hopf algebras.
\end{abstract}
\maketitle
\section{Introduction}
The main goal of this note is to describe structure of the following types of objects in a \emph{dual} and \emph{constructive} functorial formalism:
\begin{enumerate}
\item[(i)] ind-scheme of mappings between two schemes;
\item[(ii)] for any quantum group $G$, ind-scheme of $G$-mappings between two $G$-schemes, and
\item[(iii)] ind-scheme of group homomorphisms between two quantum group.
\end{enumerate}
Here, by \emph{scheme}, we mean an object which is dual to a unital associative algebra over a field.
In other words, any algebra $A$ is considered as the algebra of \emph{polynomial functions} on a scheme $\f{S}A$
($\f{S}$ stands for scheme, space or spectrum).
In Noncommutative Algebraic Geometry such an object $\f{S}A$ is called \emph{affine noncommutative scheme}; see, for instance,
\cite{KontsevichRosenberg1}. Similarly, by \emph{quantum group} or \emph{quantum group scheme},
we mean an object which is dual to a Hopf-algebra. In \cite{Sadr1}, we have considered these three types of
\emph{noncommutative mapping schemes} in the case that
domains of the mappings are \emph{finite} schemes, i.e. schemes dual to finite dimensional algebras. The idea behind of our constructions is
very well-known and simple: For spaces $Y,Z$ that have a specific structure, a map $f:Z\times X\to Y$ satisfying some appropriate properties,
can be considered as a family of the structure preserving mappings from $Z$ to $Y$, parameterized by $X$. Then, the space of all such structure
preserving mappings, if exists, must be a universal parameterizing space. Following this idea, For two algebras $B,C$, we can define the
\emph{algebra of polynomial functions on the scheme of all mappings from $\f{S}C$ to $\f{S}B$}, as an algebra $A$ together with an algebra
morphism $h:B\to C\ot A$ satisfying the following universal property: For every algebra $E$ and algebra morphism $e:B\to C\ot E$, there is
a unique algebra morphism $\hat{e}:A\to E$ satisfying $e=(\r{id}_C\ot\hat{e})h$. For more details see \cite{Sadr1,Sadr2,Sadr3,Soltan1}.
In \cite{Sadr1}, we showed that the universal algebra $A$ exists provided that $C$ is finite dimensional. In Section \ref{5-141626} of this note,
we show, by a constructive method, that if the algebra $A$ is replaced by a pro-algebra then the universal family $h:B\to C\ot A$
exists provided that $B$ is finitely generated. Also, we show that the assignment $(B,C)\mapsto A$ is functorial with respect to both of the
$B$ and $C$, see Theorem \ref{5-131325} and Corollary \ref{1711281216}.
This type of pro-algebras and the associated ind-schemes has been considered before by some authors for commutative algebras
and ordinary affine schemes, for instance see \cite{Shafarevich1,Shafarevich2,CherenaeckGuerra1,Kambayashi1,KambayashiMiyanishi1}.
(See also \cite[Section 3]{Gersten1} the noncommutative case.)
Our approach to this type of objects is very more general and seemed to be very more simple. (Also, our constructions are easily \emph{computable},
see the example at the end of Section \ref{1712071723}.) In Section \ref{1712071723} we generalize a result of
\cite{Sadr1} by showing that the ind-scheme of all mappings from a scheme to a quantum group has a canonical quantum group structure.
In Sections \ref{1712031224} and \ref{1712071728} we consider the construction of the objects of types (ii) and (iii) mentioned above.
Although some results stated in Section \ref{5-141626} have been known (at least in the commutative case) since many years ego,
but we emphasize that all the other results are new.

\textbf{Notations \& Terminology.}
Throughout, categories are denoted by bold letters.
Let $\b{C}$ be a category. For objects $C,C'$ in $\b{C}$, $\b{C}(C,C')$ denotes the set of morphisms in $\b{C}$ from $C$ to $C'$
and $\r{id}_C\in\b{C}$ denotes the identity morphism on $C$.
The categories of \emph{pro-objects} and \emph{ind-objects} of $\b{C}$ are denoted respectively by $\b{proC}$ and $\b{indC}$.
(For general theory of pro and ind objects see \cite[Appendix]{ArtinMazur1}.)
An object in $\b{proC}$ is a contravariant functor from a directed set (considered as a category in the usual way)
to $\b{C}$. Thus any object $C\in\b{proC}$ is distinguished by an indexed
family $\{C_i\}_{i\in\b{I}}$ of objects in $\b{C}$, where $(\b{I},\leq)$ is a directed set, together with a morphism
$C^{i'}_i\in\b{C}(C_{i'},C_i)$ for every $i,i'\in\b{I}$ with $i\leq i'$, such that $C^i_i=\r{id}_{C_i}$ and
$C^{i''}_i=(C^{i'}_i)(C^{i''}_{i'})$ for $i\leq i'\leq i''$. If $C,D$ are pro-objects of $\b{C}$ indexed respectively by $\b{I},\b{J}$
then the pro-morphism set is defined by
$$\b{proC}(C,D):=\underleftarrow{\lim}_j\underrightarrow{\lim}_i\b{C}(C_i,D_j).$$
The above definition of pro-morphisms can be expressed in a simple way as follows. A \emph{represented pro-morphism}
from $C$ to $D$, is a subset $\Phi$ of $\cup_{i,j}\b{C}(C_i,D_j)$ satisfying the following two conditions:
\begin{enumerate}
\item[i)] For every $j$ there exist $j'\geq j$ and $i$ such that $\Phi\cap\b{C}(C_i,D_{j'})\neq\emptyset$.
\item[ii)] $\Phi$ is compatible; this means that if $f:C_i\to D_j$ and $f':C_{i'}\to D_{j'}$ are in $\Phi$
and if $j\leq j'$ then there exists $i''\geq i,i'$ such that $fC^{i''}_i=D^{j'}_{j}f'C^{i''}_{i'}$.
\end{enumerate}
Two represented pro-morphisms $\Phi,\Psi$ from $C$ to $D$ are called \emph{equivalent} if $\Phi\cup\Psi$ is compatible.
The equivalence classes of represented pro-morphisms from $C$ to $D$ are in one-to-one correspondence (and hence identified) with
the elements of $\b{proC}(C,D)$. We denote by $[\Phi]$ the pro-morphism containing $\Phi$.
Note that if $f:C_i\to D_j$ is in $\Phi$ and $j'\leq j$ then $\Phi\cup\{D^j_{j'}f\}\in[\Phi]$. This enables us to replace $\Phi$
with an equivalent represented pro-morphism $\Phi'$ with the property that for every $j$ there is $i$ with $\Phi'\cap\b{C}(C_i,D_j)\neq\emptyset$.
And so we can define the composition of pro-morphisms in the obvious manner. Any object in $\b{C}$ can be considered as a pro-object over
a directed set with only one element. Thus we identify $\b{C}$ as a full subcategory of $\b{proC}$.

Throughout, we work over a fixed field $\m{K}$. Algebras, vector spaces, linear maps, and tensor products are all over $\m{K}$.
Algebras have units and algebra morphisms preserve units. The category of algebras is denoted by $\b{A}$.
The full subcategory of commutative algebras is denoted by $\b{A}_c$.
The opposite category $\b{S}:=\b{A}^\r{op}$ is called category of \emph{schemes}. Any morphism in $\b{S}$ is called a \emph{mapping}.
We let $\b{S}_\r{c}:=\b{A}_\r{c}^\r{op}$. The subscripts `$\r{fd}$' and `$\r{fg}$' for a category of algebras, show that objects of the category
are respectively finite dimensional and finitely generated algebras. The subscripts `$\r{fnt}$' and `$\r{fd}$' for a category of schemes,
show that objects of the category are respectively finite and finite dimensional schemes; indeed, it is assumed that $\b{S}_\r{fnt}:=\b{A}^\r{op}_\r{fd}$
and $\b{S}_\r{fd}:=\b{A}^\r{op}_\r{fg}$. It follows from the above definitions that $\b{indS}=\b{proA}^\r{op}$.
The category of Hopf algebras \cite{Sweedler1} is denoted by $\b{H}$.
The objects of $\b{G}:=\b{H}^\r{op}$ are called \emph{quantum group schemes}. Any morphism in $\b{G}$ is called \emph{group homomorphism}.
Note that any commutative Hopf algebra is dual to an ordinary group schemes \cite{Milne1}.

Throughout, the symbol $\f{S}$ denotes the renaming duality functor from any category of algebras to the corresponding category of schemes.
For an algebra $A$, by a \emph{point} of scheme $\f{S}A$ we mean an algebra morphism from $A$ to $\m{K}$. Similarly, for a pro-algebra $A$,
a point of the ind-scheme $\f{S}A$ is a pro-morphism form $A$ to $\m{K}$.

Let $A,B$ be pro-algebras in $\b{proA}$ indexed respectively by $\b{I},\b{J}$. The tensor product $A\ot B$ is a pro-algebra indexed by
$\b{I}\times\b{J}$ and defined by $(A\ot B)_{(i,j)}:=A_i\ot B_j$ and $(A\ot B)^{(i',j')}_{(i,j)}:=A^{i'}_i\ot B^{j'}_j$.
\section{Algebras of polynomial functions on mapping schemes}\label{5-141626}
First of all we define three categories $\b{B},\b{C},\b{D}$ associated to $\b{A}$, which simplify statements of our results.
The objects in $\b{B}$ are pairs $(B:G)$, denoted shortly by $B_G$,
where $B$ is an object in $\b{A}$ and $G\subseteq B$ generates $B$ as an algebra.
A morphism in $\b{B}$ from $B_G$ to $B'_{G'}$ is a morphism $g:B\to B'$ in $\b{A}$ such that $g(G)\subseteq G'$.
The objects in $\b{C}$ are pairs $(C:L)$, denoted shortly by $C_L$, where $C$ is an object in $\b{A}$ and
$L$ is a finite linearly independent subset of $C$.
A morphism in $\b{C}$ from $C_L$ to $C'_{L'}$ is a morphism $f:C\to C'$ in $\b{A}$ such that $f(L)$ is a subset of the linear span of $L'$
in $C'$. The objects in $\b{D}$ are triples $D=(B,C,\{\delta_b\}_{b\in G})$
where $B,C$ are objects in $\b{A}$, $G\subseteq{B}$ generates ${B}$ as an algebra, and $\delta_b$ is a
linearly independent finite subset of $C$ for every $b\in G$. A morphism $D\to D'=({B}',C',\{\delta'_{b'}\}_{b'\in G'})$ in $\b{D}$
is a pair $(g,f)$ where $g\in\b{A}({B},{B}'),f\in\b{A}(C',C)$ such that $g(G)\subseteq G'$ and for every $b\in G$,
$f(\delta'_{g(b)})$ is a subset of the linear span of $\delta_b$. Composition of morphisms in $\b{B},\b{C},\b{D}$ is defined in the obvious way.
For objects $B_G,C_L$ respectively in $\b{B},\b{C}$ we let $\f{d}(B_G,C_L)$ denote the object in $\b{D}$
defined by the triple $(B,C,\{\delta_b\}_{b\in G})$ where $\delta_b:=L$ for every $b\in G$. If $g:B_G\to B'_{G'},f:C'_{L'}\to C_L$
are morphisms respectively in $\b{B},\b{C}$ then we let $\f{d}(g,f)$ be the morphism $(g,f)$ in $\b{D}$ from
${\f{d}}(B_G,C_L)$ to ${\f{d}}(B'_{G'},C'_{L'})$. Then ${\f{d}}$ can be considered as a functor:
\begin{equation}\label{5-291249}
\f{d}:\b{B}\times\b{C}^\r{op}\to\b{D}.
\end{equation}
\begin{lemma}\label{5-121553}
Let $D=({B},C,\{\delta_b\}_{b\in G})$ be an object in $\b{D}$. Then there exist an algebra $\tilde{\f{A}}(D)=A$
and a morphism $\tilde{\f{h}}(D)=h:{B}\to C\ot A$ such that for every $b\in G$, $h(b)$ is a sum of elements of the form $c\ot a$
with $c\in\delta_b,a\in A$, and such that the pair $(A,h)$ is universal with respect to this property, that is, if $A'$ is an algebra
and $h':{B}\to C\ot A'$ is an algebra morphism such that for every $b\in G$, $h'(b)$
is a sum of elements of the form $c\ot a'$ with $c\in\delta_b$, then there is a unique morphism
$\alpha:A\to A'$ in $\b{A}$ such that $h'=(\r{id}_C\ot \alpha)h$.
\end{lemma}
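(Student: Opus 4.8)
The plan is to realize $A$ by generators and relations, which is the usual recipe for a representability statement of this kind: the generators encode the coefficients that the condition on $h(b)$ forces, and the relations are exactly what is needed to make the resulting map respect the relations of $B$. First I would introduce the free algebra $F$ on the set of symbols $\{x_{b,c}:b\in G,\ c\in\delta_b\}$ and set $\tilde{h}_0(b):=\sum_{c\in\delta_b}c\ot x_{b,c}\in C\ot A$ with $A:=F$ provisionally. Since $G$ generates $B$, I would fix the free algebra $\tilde{B}$ on the set $G$ together with the canonical surjection $\pi:\tilde{B}\to B$ in $\b{A}$; because $C\ot F$ is an algebra, the assignment $b\mapsto\tilde{h}_0(b)$ extends uniquely to an algebra morphism $\tilde{h}:\tilde{B}\to C\ot F$. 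The obstruction to pushing $\tilde{h}$ down to $B$ is the relation ideal $\ker\pi$.

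Next I would fix a basis $\{e_\lambda\}$ of the vector space $C$, so that every element of $C\ot F$ has a unique expansion $\sum_\lambda e_\lambda\ot f_\lambda$ with $f_\lambda\in F$. For $r\in\ker\pi$, writing $\tilde{h}(r)=\sum_\lambda e_\lambda\ot f^{(r)}_\lambda$, I would let $J\subseteq F$ be the two-sided ideal generated by all the components $f^{(r)}_\lambda$, set $A:=F/J$ with quotient map $q:F\to A$, and define $h:B\to C\ot A$ as the morphism induced from $(\r{id}_C\ot q)\tilde{h}$ by factoring through $\pi$. This factorization is legitimate precisely because, by the definition of $J$, one has $(\r{id}_C\ot q)\tilde{h}(r)=\sum_\lambda e_\lambda\ot q(f^{(r)}_\lambda)=0$ for every $r\in\ker\pi$, and $\pi$ is a surjective algebra morphism. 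By construction $h(b)=\sum_{c\in\delta_b}c\ot q(x_{b,c})$ is a sum of the required form.

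Finally I would verify the universal property. Given $(A',h')$ as in the statement, linear independence of each finite set $\delta_b$ (extend it to a basis of $C$) shows that $h'(b)=\sum_{c\in\delta_b}c\ot a'_{b,c}$ for uniquely determined $a'_{b,c}\in A'$, and any candidate $\alpha$ must then satisfy $\alpha(q(x_{b,c}))=a'_{b,c}$; since these elements generate $A$, this already gives uniqueness. For existence, define $\alpha_F:F\to A'$ on generators by $x_{b,c}\mapsto a'_{b,c}$. Then $h'\pi$ and $(\r{id}_C\ot\alpha_F)\tilde{h}$ are algebra morphisms $\tilde{B}\to C\ot A'$ that agree on the generators $G$, hence coincide; evaluating at $r\in\ker\pi$ and using once more that $\{e_\lambda\}$ is linearly independent yields $\alpha_F(f^{(r)}_\lambda)=0$ for all $r,\lambda$, so $\alpha_F$ kills $J$ and descends to $\alpha:A\to A'$. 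Comparing values on the generators of $B$ shows $h'=(\r{id}_C\ot\alpha)h$.

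I expect the main subtlety to be the correct calibration of $J$: it must be large enough that $h$ is well defined on $B$, yet small enough that the factorization $\alpha$ exists for every competitor $(A',h')$. The device that reconciles these two demands is the expansion in a fixed basis of $C$, together with the linear independence both of $\{e_\lambda\}$ and of the sets $\delta_b$, which lets one pass freely between vanishing in $C\ot A'$ and vanishing of each component in $A'$. Everything else is the routine bookkeeping of presentations, and I would only remark in passing that the resulting $J$ is independent of the chosen basis, being the smallest two-sided ideal with $\tilde{h}(\ker\pi)\subseteq C\ot J$.
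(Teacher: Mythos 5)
Your proof is correct and takes essentially the same approach as the paper: the paper simply declares $A$ to be ``the universal algebra generated by the symbols $\{x_{b,c}:b\in G,\,c\in\delta_b\}$ such that $b\mapsto\sum_{c\in\delta_b}c\ot x_{b,c}$ defines an algebra morphism $B\to C\ot A$,'' and your construction $A=F/J$ (with $J$ generated by the basis components of $\tilde{h}(\ker\pi)$) is precisely the rigorous realization of that sentence, together with the verification of the universal property that the paper leaves implicit.
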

\begin{proof}
The desired algebra $A$ is the universal algebra in $\b{A}$ generated by the set of symbols
$\{x_{b,c}:b\in G,c\in \delta_b\}$ such that the assignment $b\mapsto\sum_{c\in\delta_b}c\ot x_{b,c}$
defines an algebra morphism $h$ from ${B}$ to $C\ot A$.
\end{proof}
Let $(g,f):D\to D'$ be a morphism in $\b{D}$. Then there is a unique morphism $\alpha:\tilde{\f{A}}(D)\to\tilde{\f{A}}(D')$
in $\b{A}$ satisfying $(\r{id}\ot\alpha)\tilde{\f{h}}(D)=(f\ot\r{id})\tilde{\f{h}}(D')g$. We denote $\alpha$ by $\tilde{\f{A}}(g,f)$.
\begin{lemma}\label{p2}
The assignments $D\mapsto\tilde{\f{A}}(D),(g,f)\mapsto\tilde{\f{A}}(g,f)$ define a functor
$\tilde{\f{A}}:\b{D}\to\b{A}$,
and the assignment $D=(B,C,\{\delta_b\}_{b\in G})\mapsto[\tilde{\f{h}}(D):B\to C\ot\tilde{\f{A}}(D)]$ defines a natural transformation.
\end{lemma}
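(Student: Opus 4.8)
The plan is to run everything through the universal property of Lemma~\ref{5-121553} and the uniqueness of the comparison morphism it provides, so that each claim reduces to exhibiting a map satisfying the defining equation and then appealing to uniqueness. Fix notation: write $D=(B,C,\{\delta_b\}_{b\in G})$, $A=\tilde{\f{A}}(D)$, $h=\tilde{\f{h}}(D)$, and analogously $D'=(B',C',\{\delta'_{b'}\}_{b'\in G'})$, $A'=\tilde{\f{A}}(D')$, $h'=\tilde{\f{h}}(D')$.

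First I would confirm the well-definedness of $\tilde{\f{A}}$ on a morphism $(g,f):D\to D'$ (this is what underlies the assertion made just before the lemma). The candidate family is $h'':=(f\ot\r{id}_{A'})h'g:B\to C\ot A'$. For $b\in G$ we have $g(b)\in G'$ since $g(G)\subseteq G'$, so $h'(g(b))$ is a sum of terms $c'\ot a'$ with $c'\in\delta'_{g(b)}$; applying $f\ot\r{id}_{A'}$ produces terms $f(c')\ot a'$, and because $f(\delta'_{g(b)})$ lies in the linear span of $\delta_b$, each $h''(b)$ is a linear combination of elements $c\ot a'$ with $c\in\delta_b$. Thus $h''$ meets the hypothesis of Lemma~\ref{5-121553} for $D$, and the universal property of $(A,h)$ yields the unique $\alpha=\tilde{\f{A}}(g,f)$ with $(\r{id}_C\ot\alpha)h=h''$.

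Functoriality then follows from uniqueness. For $\r{id}_D=(\r{id}_B,\r{id}_C)$ the identity $\r{id}_A$ satisfies the defining equation, so $\tilde{\f{A}}(\r{id}_D)=\r{id}_A$. For a composite $(g',f')(g,f)=(g'g,ff')$ in $\b{D}$ (the $C$-components compose contravariantly), set $\alpha=\tilde{\f{A}}(g,f)$, $\alpha'=\tilde{\f{A}}(g',f')$, and $A''=\tilde{\f{A}}(D'')$. I would check that $\alpha'\alpha$ satisfies the equation defining $\tilde{\f{A}}(g'g,ff')$: starting from $(\r{id}_C\ot\alpha'\alpha)h=(\r{id}_C\ot\alpha')(\r{id}_C\ot\alpha)h$ and substituting the two defining equations, the only nonformal step is the interchange identity $(\r{id}_C\ot\alpha')(f\ot\r{id}_{A'})=(f\ot\r{id}_{A''})(\r{id}_{C'}\ot\alpha')$, after which the tensor factors collapse via $(f\ot\r{id}_{A''})(f'\ot\r{id}_{A''})=(ff')\ot\r{id}_{A''}$ to give exactly $(ff'\ot\r{id}_{A''})\tilde{\f{h}}(D'')(g'g)$. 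Uniqueness forces $\tilde{\f{A}}(g'g,ff')=\alpha'\alpha$.

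The natural-transformation assertion then requires essentially no further computation: the component at $D$ is $\tilde{\f{h}}(D):B\to C\ot\tilde{\f{A}}(D)$, whose source is the value at $D$ of the projection functor $\b{D}\to\b{A}$, $(B,C,\{\delta_b\})\mapsto B$, and whose compatibility with $(g,f)$ is precisely the square $(\r{id}_C\ot\alpha)\tilde{\f{h}}(D)=(f\ot\r{id})\tilde{\f{h}}(D')g$ that \emph{defines} $\alpha$; so naturality holds by construction. The one point I would flag, rather than a genuine obstacle, is bookkeeping of variance: $B$ enters covariantly while $C$ and the data $\delta_b$ enter contravariantly, so the target assignment $D\mapsto C\ot\tilde{\f{A}}(D)$ is not a single covariant codomain functor, and it is the displayed compatibility squares themselves that carry the naturality statement. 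Apart from this and the tensor-flip identity used above, the lemma is the standard universal-property-plus-uniqueness argument.
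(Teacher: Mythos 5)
Your proof is correct and follows exactly the paper's approach: the paper's own proof is the one-line remark that everything ``follows from the universal property of pairs $(\tilde{\f{A}}(D),\tilde{\f{h}}(D))$,'' and your argument is precisely that argument with the details (well-definedness of $\tilde{\f{A}}(g,f)$, identity and composition via uniqueness, and naturality holding by the defining equation) written out. Your flag about the mixed variance of $C$ versus $B$ is a fair observation about the paper's loose use of ``natural transformation,'' and your resolution --- that the naturality content is exactly the compatibility squares defining $\tilde{\f{A}}(g,f)$ --- is the intended reading.
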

\begin{proof}
It follows from the universal property of pairs $(\tilde{\f{A}}(D),\tilde{\f{h}}(D))$.
\end{proof}
We have the following corollary of Lemmas \ref{p2} and \ref{5-121553}.
\begin{theorem}\label{5-301727}
There exist a functor $\f{A}$ and a natural transformation $\f{h}$,
\begin{equation}\label{5-211732}
\f{A}:\b{B}\times\b{C}^\r{op}\to\b{A},\quad(B_G,C_L)\mapsto[\f{h}(B_G,C_L):B\to C\ot\f{A}(B_G,C_L)],
\end{equation}
such that for $b\in G$, $\f{h}(B_G,C_L)(b)$ is a linear combination of elements of the form $c\ot a$ with $c\in L$, and the pair
$(\f{A}(B_G,C_L),\f{h}(B_G,C_L))$ is universal with respect to this latter property.
\end{theorem}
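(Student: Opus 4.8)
The plan is to obtain $\f{A}$ and $\f{h}$ by composing the data already produced in Lemmas \ref{5-121553} and \ref{p2} with the functor $\f{d}$ of (\ref{5-291249}), so that the theorem really is just a corollary. Concretely, I would set $\f{A}:=\tilde{\f{A}}\circ\f{d}$; since $\f{d}:\b{B}\times\b{C}^\r{op}\to\b{D}$ and $\tilde{\f{A}}:\b{D}\to\b{A}$ are both functors by the preceding lemma, their composite is a functor $\b{B}\times\b{C}^\r{op}\to\b{A}$ with no further work. For the natural transformation I would whisker $\tilde{\f{h}}$ by $\f{d}$, that is, define $\f{h}(B_G,C_L):=\tilde{\f{h}}(\f{d}(B_G,C_L))$. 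Because whiskering a natural transformation by a functor again yields a natural transformation, $\f{h}$ is automatically natural in the pair $(B_G,C_L)$, covariantly in $B_G$ and contravariantly in $C_L$, inheriting naturality from $\tilde{\f{h}}$ over $\b{D}$.

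The only substantive point is to verify that the universal property asserted here coincides with the one supplied by Lemma \ref{5-121553} for the specific object $\f{d}(B_G,C_L)$. By the definition of $\f{d}$, this object is the triple $(B,C,\{\delta_b\}_{b\in G})$ in which $\delta_b=L$ for every $b\in G$. Consequently, for this particular $D$ the requirement that $h(b)$ be a sum of elements $c\ot a$ with $c\in\delta_b$ specializes verbatim to the requirement that $\f{h}(B_G,C_L)(b)$ be a linear combination of elements $c\ot a$ with $c\in L$. Hence the pair $(\tilde{\f{A}}(\f{d}(B_G,C_L)),\tilde{\f{h}}(\f{d}(B_G,C_L)))$, which by construction equals $(\f{A}(B_G,C_L),\f{h}(B_G,C_L))$, enjoys exactly the universal property in the statement.

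I expect no genuine obstacle, as the content has been front-loaded into the two lemmas and into the construction of $\f{d}$. The single place that warrants a moment's care is checking that $\f{d}$ matches morphisms correctly, so that naturality passes through cleanly: recalling that $\f{d}$ sends $(g,f)$ to the morphism $(g,f)$ of $\b{D}$, the defining relation $(\r{id}\ot\alpha)\tilde{\f{h}}(D)=(f\ot\r{id})\tilde{\f{h}}(D')g$ for $\alpha=\tilde{\f{A}}(g,f)$ specializes to the corresponding relation for $\f{A}(g,f)$, which is precisely the compatibility square expressing naturality of $\f{h}$. Once this specialization is noted, the functoriality of $\f{A}$ and naturality of $\f{h}$ follow immediately from Lemma \ref{p2}.
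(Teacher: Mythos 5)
Your proposal is correct and follows exactly the paper's own proof: the paper likewise defines $\f{A}$ as the composite $\tilde{\f{A}}\f{d}$ and $\f{h}(B_G,C_L):=\tilde{\f{h}}\f{d}(B_G,C_L)$, and deduces the theorem from Lemmas \ref{p2} and \ref{5-121553}. Your additional remarks (that $\delta_b=L$ makes the universal property specialize verbatim, and that naturality is inherited by whiskering) merely spell out the details the paper leaves to the reader.
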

The universal property offers that $\f{S}\f{A}(B_G,C_L)$ to be called
\emph{scheme of mappings of type $(L,G)$ from $\f{S}C$ to $\f{S}B$}.
\begin{proof}
Let $\f{A}$ be the composition $\tilde{\f{A}}\f{d}$ and let $\f{h}(B_G,C_L):=\tilde{\f{h}}\f{d}(B_G,C_L)$ where $\f{d}$ is
the functor described in (\ref{5-291249}). Then the theorem follows easily from Lemmas \ref{p2} and \ref{5-121553}.
\end{proof}
Let $B\in\b{A}$ and $C\in\b{A}_\r{fd}$. Suppose that $G,G'\subset B$ be two generating set for $B$ and $V,V'$ be two vector basis for $C$.
It follows from the universal property described in Theorem \ref{5-301727} that the algebras $\f{A}(B_G,C_V)$ and $\f{A}(B_{G'},C_{V'})$
are canonically isomorphic. Thus, $\f{A}$ can be considered as a functor from $\b{A}\times\b{A}^\r{op}_\r{fd}$ to $\b{A}$:
\begin{theorem}\label{1711241959}
There exist a functor $\f{A}$ and a natural transformation $\f{h}$,
\begin{equation}\label{5-131237}
\f{A}:\b{A}\times\b{A}^\r{op}_\r{fd}\to\b{A},\quad\quad(B,C)\mapsto[\f{h}(B,C):B\to C\ot\f{A}(B,C)],
\end{equation}
such that for every algebra $E$ and morphism $e:B\to C\ot E$ there is a unique morphism $\hat{e}:\f{A}(B,C)\to E$
satisfying $e=(\r{id}\ot\hat{e})\f{h}(B,C)$.
\end{theorem}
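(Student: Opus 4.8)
The plan is to derive this as a clean corollary of Theorem \ref{5-301727} by using the finite-dimensionality of $C$ to make one specific choice of decorations, and then checking that the resulting object does not depend on that choice. First I would fix a generating set $G$ for $B$ and, crucially, a \emph{basis} $V$ of $C$; since $C$ is finite dimensional, $V$ is finite, so $C_V$ is a legitimate object of $\b{C}$ and $B_G$ is an object of $\b{B}$. I would then simply set $\f{A}(B,C):=\f{A}(B_G,C_V)$ and $\f{h}(B,C):=\f{h}(B_G,C_V)$, with existence guaranteed by Theorem \ref{5-301727}.

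The observation that converts the restricted universal property into the unrestricted one is that, because $V$ spans $C$, the condition ``$\f{h}(b)$ is a linear combination of $c\ot a$ with $c\in V$'' is vacuous: every element of $C\ot E$ already lies in $\mathrm{span}(V)\ot E=C\ot E$. Thus, given an arbitrary algebra $E$ and an arbitrary morphism $e:B\to C\ot E$, the elements $e(b)$ for $b\in G$ automatically satisfy the hypothesis of Theorem \ref{5-301727}, so that theorem produces a unique $\hat{e}:\f{A}(B,C)\to E$ with $e=(\r{id}\ot\hat{e})\f{h}(B,C)$. This is precisely the asserted universal property, now with no constraint on $e$.

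Next I would establish that $\f{A}(B,C)$ is independent, up to canonical isomorphism, of the auxiliary choices $G$ and $V$ (as already anticipated in the text preceding the statement). The cleanest route is Yoneda: the displayed universal property says exactly that $\f{A}(B,C)$ represents the set-valued functor $E\mapsto\b{A}(B,C\ot E)$, and this functor is manifestly independent of $G$ and $V$. Hence any two such representing objects are related by a unique isomorphism compatible with the universal elements, which legitimizes writing $\f{A}(B,C)$ without reference to the decorations.

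Finally, functoriality and naturality follow formally from the same representability. A morphism of $\b{A}\times\b{A}^\r{op}_\r{fd}$ is a pair $(g,f)$ with $g:B\to B'$ and $f:C'\to C$ in $\b{A}$; I would define $\f{A}(g,f):\f{A}(B,C)\to\f{A}(B',C')$ to be the unique morphism induced, through the universal property of $\f{A}(B,C)$, by the composite $(f\ot\r{id})\f{h}(B',C')g:B\to C\ot\f{A}(B',C')$. The functor axioms and the naturality of $\f{h}$ then reduce to uniqueness assertions in the universal property and follow by the standard argument. I expect the only genuinely delicate point to be the bookkeeping of the two variances, covariant in $B$ and contravariant in $C$; but these are exactly the variances induced on the representing objects by precomposition with $g$ and by $f\ot\r{id}$ under the Yoneda correspondence, so no real obstacle arises.
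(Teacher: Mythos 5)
Your proof is correct and follows essentially the same route as the paper: the paper also obtains the result directly from Theorem \ref{5-301727} by choosing a generating set $G$ and a (finite) basis $V$ of $C$, noting that the span condition becomes vacuous when $L=V$ is a basis, and invoking the universal property to get canonical isomorphisms independent of the choices of $G$ and $V$. Your write-up merely makes explicit (via representability/Yoneda) the details the paper leaves as ``immediate.''
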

Note also that there is a canonical one-to-one correspondence between $\b{A}(\f{A}(B,C),\m{K})$ and $\b{A}(B,C)$ induced by the universal
property of Theorem \ref{5-301727}. For more details see \cite{Sadr1}.
\begin{proof}
It follows immediately from Theorem \ref{5-301727}.
\end{proof}
Let $\f{M}$ denote the functor induced by $\f{A}$ on the opposite categories. We have the following immediate corollary of Theorem \ref{1711241959}.
\begin{corollary}\label{1711281203}
We have a canonical functor $\f{M}:\b{S}_\r{fnt}^\r{op}\times\b{S}\to\b{S}$ such that for $S_1\in\b{S}_\r{fnt},S_2\in\b{S}$ there is a natural
bijection between the set of mappings from $S_1$ to $S_2$ and the set of points of $\f{M}(S_1,S_2)$. Thus, $\f{M}(S_1,S_2)$ is called the \emph{scheme
of mappings from $S_1$ to $S_2$}.
\end{corollary}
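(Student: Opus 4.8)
The plan is to obtain everything from Theorem \ref{1711241959} by passing to opposite categories, since $\f{M}$ is defined as the functor induced by $\f{A}$ there. First I would fix the types. Writing $S_1=\f{S}C$ with $C\in\b{A}_\r{fd}$ and $S_2=\f{S}B$ with $B\in\b{A}$, the functor $\f{A}:\b{A}\times\b{A}^\r{op}_\r{fd}\to\b{A}$ of (\ref{5-131237}) dualizes to $\f{A}^\r{op}:\b{A}^\r{op}\times\b{A}_\r{fd}\to\b{A}^\r{op}$; after swapping the two factors and invoking the identifications $\b{S}=\b{A}^\r{op}$ and $\b{S}_\r{fnt}^\r{op}=(\b{A}^\r{op}_\r{fd})^\r{op}=\b{A}_\r{fd}$, this is precisely a functor $\f{M}:\b{S}_\r{fnt}^\r{op}\times\b{S}\to\b{S}$ with $\f{M}(\f{S}C,\f{S}B)=\f{S}\f{A}(B,C)$ on objects. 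The only point requiring care here is the bookkeeping of variances: $\f{A}$ is covariant in $B$ and contravariant in $C$, as recorded by the action on morphisms of $\b{D}$ (a morphism there being a pair $g:B\to B'$, $f:C'\to C$), and one must check these match the variances prescribed by $\b{S}_\r{fnt}^\r{op}\times\b{S}$.

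Next I would exhibit the bijection. By definition a point of $\f{M}(S_1,S_2)=\f{S}\f{A}(B,C)$ is an element of $\b{A}(\f{A}(B,C),\m{K})$, whereas a mapping $S_1\to S_2$ in $\b{S}$ is exactly an element of $\b{A}(B,C)$. Specializing the universal property of Theorem \ref{1711241959} to $E=\m{K}$ and using the canonical isomorphism $C\ot\m{K}\cong C$, each algebra morphism $e:B\to C$ factors uniquely as $e=(\r{id}\ot\hat{e})\f{h}(B,C)$ for a unique $\hat{e}:\f{A}(B,C)\to\m{K}$. The assignment $e\mapsto\hat{e}$ is the sought bijection $\b{A}(B,C)\cong\b{A}(\f{A}(B,C),\m{K})$, which is none other than the correspondence already noted immediately after Theorem \ref{1711241959}.

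For naturality I would check that, for morphisms $S_1\to S_1'$ in $\b{S}_\r{fnt}^\r{op}$ and $S_2\to S_2'$ in $\b{S}$, the square relating the two mapping sets to the two point sets commutes. This is immediate from $\f{h}$ being a natural transformation in Theorem \ref{1711241959}: composing a point of $\f{A}(B,C)$ with $\f{A}$ applied to a morphism corresponds, under the bijection of the previous paragraph, to the induced map on mapping sets. I expect no genuine obstacle, as the entire content is supplied by Theorem \ref{1711241959} and the corollary merely restates it in the dual scheme language together with the $E=\m{K}$ specialization; the single step deserving attention, more clerical than hard, is confirming that the variances and the factor-swap combine so that $\f{M}$ really has the advertised type $\b{S}_\r{fnt}^\r{op}\times\b{S}\to\b{S}$.
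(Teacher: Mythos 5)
Your proposal is correct and follows the same route as the paper: the paper defines $\f{M}$ as the functor induced by $\f{A}$ of Theorem \ref{1711241959} on the opposite categories, and obtains the bijection exactly as you do, from the universal property with $E=\m{K}$ (the correspondence $\b{A}(\f{A}(B,C),\m{K})\cong\b{A}(B,C)$ noted right after that theorem). Your extra care with variances and the explicit naturality check via $\f{h}$ only spell out what the paper treats as immediate.
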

Now we show that if the values of $\f{A}$ in (\ref{5-211732}) and (\ref{5-131237})
are allowed to belong to $\b{proA}$ then $\f{A}$ can be extended to
$\b{A}_\r{fg}\times \b{A}^\r{op}$. Let $B\in\b{A}_\r{fg}$ and $C\in\b{A}$. Suppose that $G\subset B$ is a finite generating set for $B$,
and $V$ is a vector basis for $C$. Let $\b{J}$ denote the directed set of all finite subsets of $V$ with inclusion as the ordering.
We define a pro-algebra ${\f{A}}(B,C)=A$ indexed by $\b{J}$, and a pro-morphism ${\f{h}}(B,C)=h:B\to C\ot A$ in $\b{proA}$, as follows.
For every $L\in\b{J}$ let $A_L:={\f{A}}(B_G,C_L)$. For $L_1\subseteq L_2\in\b{J}$, $(\r{id},\r{id})$ defines a morphism in
$\b{B}\times\b{C}^\r{op}$ from $(B_G,C_{L_2})$ to $(B_G,C_{L_1})$. Let $A^{L_2}_{L_1}:={\f{A}}(\r{id},\r{id})$. Consider the set
of morphisms $\Phi:=\{\phi_L:L\in\b{J}\}$ where $\phi_L:={\f{h}}(B_G,C_L)$. Then $\Phi$ is a represented pro-morphism
from $B$ to $C\ot A$. Set $h:=[\Phi]$. We now show that the pair $(A,h)$ has a universal property analogous to the one mentioned in
Theorem \ref{1711241959}: Let $E$ be a pro-algebra indexed by $\b{I}$ and $e:B\to C\ot E$ be a pro-morphism in $\b{proA}$. Let $\Psi$ be a represented
pro-morphism such that $e=[\Psi]$. Let $\psi:B\to C\ot E_i$ belongs to $\Psi$. Since $G$ is finite, there is $L_\psi\in J$
such that for every $b\in G$, $\psi(b)$ is a linear combination of elements of the form $c\ot x$ with $c\in L_\psi$. By the universal property
of $(A_{L_\psi},\phi_{L_\psi})$ there is a unique morphism $\omega_\psi:A_{L_\psi}\to E_i$ such that $\psi=(\r{id}\ot\omega_\psi)\phi_{L_\psi}$.
Let $\Omega:=\{\omega_\psi:\psi\in\Psi\}$. Then $\Omega$ is a represented pro-morphism from $A$ to $E$ and we have $e=(\r{id}\ot[\Omega])h$.
Also it is not hard to see that if a represented pro-morphism $\Omega'$ from $A$ to $E$ satisfies $e=(\r{id}\ot[\Omega'])h$ then
$\Omega\cup\Omega'$ is compatible and thus $[\Omega']=[\Omega]$. This finishes the establishment of the universal property of $(A,h)$.
This property has three important consequences. The first one is that the isomorphism class of ${\f{A}}(B,C)$ in $\b{proA}$
does not depend on the specific choice of $G$ or $V$. The second one is that if $g:B\to B',f:C'\to C$ are morphisms in $\b{A}$ where $B'$
is finitely generated then there is a unique pro-morphism ${\f{A}}(g,f):{\f{A}}(B,C)\to{\f{A}}(B',C')$ such that
$$(f\ot\r{id}){\f{h}}(B',C')g=(\r{id}\ot{\f{A}}(g,f)){\f{h}}(B,C).$$
Dissembling some Set theoretical difficulties which are removable by working in a fixed universe, the third consequence
is that the assignments $(B,C)\mapsto{\f{A}}(B,C),(g,f)\mapsto{\f{A}}(g,f)$ define a covariant functor from the bicategory
$\b{A}_\r{fg}\times\b{A}^\r{op}$ to $\b{proA}$. Moreover, the construction shows that $\f{A}$ takes actually values in $\b{proA}_\r{fg}$.
So, we have proved:
\begin{theorem}\label{5-131325}
There exist a functor $\f{A}$ and a natural transformation $\f{h}$,
\begin{equation}\label{5-211757}
\f{A}:\b{A}_\r{fg}\times\b{A}^\r{op}\to\b{proA}_\r{fg},\quad(B,C)\mapsto[\f{h}(B,C):B\to C\ot\f{A}(B,C)],
\end{equation}
with the following universal property: If $E\in\b{proA}$ and $e:B\to C\ot E$ is a pro-morphism
then there is a unique pro-morphism $\hat{e}:\f{A}(B,C)\to E$ satisfying $e=(\r{id}\ot\hat{e})\f{h}(B,C)$.
\end{theorem}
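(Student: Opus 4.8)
The plan is to reduce everything to the finite-dimensional construction already secured in Theorem \ref{5-301727} and then to reassemble the pieces as a single pro-algebra indexed by the finite subsets of a basis. First I would fix a finite generating set $G\subset B$ (available since $B\in\b{A}_\r{fg}$) and a vector basis $V$ of $C$, and let $\b{J}$ be the directed poset of finite subsets of $V$ ordered by inclusion. For each $L\in\b{J}$ the pair $C_L$ is an object of $\b{C}$, so Theorem \ref{5-301727} produces $A_L:=\f{A}(B_G,C_L)$ together with the universal morphism $\phi_L:=\f{h}(B_G,C_L)$. Whenever $L_1\subseteq L_2$ the identity maps constitute a morphism $(B_G,C_{L_2})\to(B_G,C_{L_1})$ in $\b{B}\times\b{C}^\r{op}$, and applying $\f{A}$ yields transition morphisms $A^{L_2}_{L_1}:A_{L_2}\to A_{L_1}$; functoriality of $\f{A}$ guarantees $A^L_L=\r{id}$ and $A^{L_3}_{L_1}=A^{L_2}_{L_1}A^{L_3}_{L_2}$ for $L_1\subseteq L_2\subseteq L_3$, so $A:=\{A_L\}_{L\in\b{J}}$ is a genuine pro-algebra, which I take to be $\f{A}(B,C)$.

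Next I would organise the family $\Phi:=\{\phi_L:L\in\b{J}\}$ into a represented pro-morphism $B\to C\ot A$. Condition (i) is immediate since every index $L$ is hit, and compatibility (condition (ii)) is exactly the naturality supplied by Lemma \ref{p2}; I set $\f{h}(B,C):=[\Phi]$. The heart of the argument is the universal property. Given $E\in\b{proA}$ indexed by $\b{I}$ and a pro-morphism $e=[\Psi]:B\to C\ot E$, I would use the finiteness of $G$ in an essential way: for each representative $\psi:B\to C\ot E_i$ in $\Psi$, only finitely many basis vectors of $V$ occur in the finitely many images $\psi(b)$, $b\in G$, so there is a single $L_\psi\in\b{J}$ through which $\psi$ is of type $(L_\psi,G)$. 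The universal property of the finite-dimensional pair $(A_{L_\psi},\phi_{L_\psi})$ then furnishes a unique $\omega_\psi:A_{L_\psi}\to E_i$ with $\psi=(\r{id}\ot\omega_\psi)\phi_{L_\psi}$. Collecting these as $\Omega:=\{\omega_\psi:\psi\in\Psi\}$ gives a represented pro-morphism $A\to E$ satisfying $e=(\r{id}\ot[\Omega])h$, and I would put $\hat e:=[\Omega]$.

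What remains is bookkeeping at the level of pro-morphism equivalence, which I expect to be the most delicate point rather than the constructive core. For uniqueness I would show that any $\Omega'$ with $e=(\r{id}\ot[\Omega'])h$ is compatible with $\Omega$, forcing $[\Omega']=[\Omega]$; this rests again on the level-wise uniqueness clause of Theorem \ref{5-301727}. The same universal property then yields the three consequences: the class of $\f{A}(B,C)$ in $\b{proA}$ is independent of the auxiliary choices of $G$ and $V$ (so the object assignment is well defined up to canonical isomorphism), a pair of morphisms $g:B\to B'$, $f:C'\to C$ with $B'$ finitely generated induces a unique $\f{A}(g,f)$ compatible with the $\f{h}$'s, and these data assemble into the desired functor $\b{A}_\r{fg}\times\b{A}^\r{op}\to\b{proA}$. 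Finally, since each $A_L$ is generated by the finite set of symbols $\{x_{b,c}:b\in G,\ c\in L\}$ from Lemma \ref{5-121553}, every $A_L$ lies in $\b{A}_\r{fg}$, so $\f{A}$ in fact takes values in $\b{proA}_\r{fg}$. The only real friction, as the surrounding discussion hints, is the careful tracking of represented pro-morphisms and their equivalence classes across changes of representatives and of the data $(G,V)$; the underlying algebra is entirely governed by the finite-level universal property.
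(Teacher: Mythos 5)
Your proposal is correct and follows essentially the same route as the paper's own proof: the same directed system $\{A_L=\f{A}(B_G,C_L)\}_{L\in\b{J}}$ indexed by finite subsets of a basis of $C$, the same represented pro-morphism $\Phi=\{\phi_L\}$, and the same use of the finiteness of $G$ to factor each representative $\psi$ through some $A_{L_\psi}$, with uniqueness handled by compatibility of $\Omega\cup\Omega'$. Your closing observation that each $A_L$ is finitely generated by the symbols $\{x_{b,c}:b\in G,\,c\in L\}$ makes explicit a point the paper leaves implicit, but the argument is otherwise identical.
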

We have the following immediate corollary of Theorem \ref{5-131325}.
\begin{corollary}\label{1711281216}
We have a canonical functor $\f{M}:\b{S}^\r{op}\times\b{S}_\r{fd}\to\b{indS}_\r{fd}$ such that for $S_1\in\b{S},S_2\in\b{S}_\r{fd}$ there is a natural
bijection between the set of mappings from $S_1$ to $S_2$ and the set of points of $\f{M}(S_1,S_2)$.
Thus, $\f{M}(S_1,S_2)$ is called the \emph{ind-scheme of mappings from $S_1$ to $S_2$}.
\end{corollary}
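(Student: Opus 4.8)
The plan is to obtain $\f{M}$ by dualizing the functor $\f{A}$ of Theorem \ref{5-131325} and then to read off the bijection on points from the universal property with target $\m{K}$. First I would set up the duality dictionary. Since $\b{S}=\b{A}^\r{op}$, $\b{S}_\r{fd}=\b{A}_\r{fg}^\r{op}$ and $\b{indS}=\b{proA}^\r{op}$, passing to opposite categories turns the functor
$$\f{A}:\b{A}_\r{fg}\times\b{A}^\r{op}\to\b{proA}_\r{fg}$$
into the opposite functor $\f{A}^\r{op}:\b{S}_\r{fd}\times\b{S}^\r{op}\to\b{indS}_\r{fd}$, using $(\b{A}_\r{fg})^\r{op}=\b{S}_\r{fd}$, $(\b{A}^\r{op})^\r{op}=\b{A}=\b{S}^\r{op}$, and $(\b{proA}_\r{fg})^\r{op}=\b{indS}_\r{fd}$. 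After swapping the two factors (the product is symmetric, and under duality the roles of domain and codomain of a mapping are interchanged, so the arbitrary algebra $C$ yields the domain scheme and the finitely generated $B$ yields the finite-dimensional codomain scheme) this is exactly the desired $\f{M}:\b{S}^\r{op}\times\b{S}_\r{fd}\to\b{indS}_\r{fd}$. Explicitly, writing $S_1=\f{S}C$ with $C\in\b{A}$ and $S_2=\f{S}B$ with $B\in\b{A}_\r{fg}$, I set $\f{M}(S_1,S_2):=\f{S}\f{A}(B,C)$, and on morphisms $\f{M}$ is $\f{S}$ applied to the structure morphisms $\f{A}(g,f)$; functoriality of $\f{M}$ is then immediate from functoriality of $\f{A}$.

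Next I would establish the bijection. A point of the ind-scheme $\f{M}(S_1,S_2)=\f{S}\f{A}(B,C)$ is, by the definition of points for pro-algebras, a pro-morphism $\f{A}(B,C)\to\m{K}$. On the other side, a mapping $S_1\to S_2$ in $\b{S}$ is, by definition of $\b{S}=\b{A}^\r{op}$, an algebra morphism $B\to C$, i.e. an element of $\b{A}(B,C)$; since $B$ and $C$ are honest algebras and $\b{A}$ is a full subcategory of $\b{proA}$, such a morphism is the same as a pro-morphism $B\to C$. The key step is to invoke the universal property of Theorem \ref{5-131325} with $E=\m{K}$, regarded as a pro-algebra over a one-element directed set. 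Using the canonical identification $C\ot\m{K}\cong C$, a pro-morphism $e:B\to C\ot\m{K}$ is the same datum as an algebra morphism $B\to C$, and the universal property supplies, for each such $e$, a unique pro-morphism $\hat{e}:\f{A}(B,C)\to\m{K}$ with $e=(\r{id}\ot\hat{e})\f{h}(B,C)$. The assignment $e\mapsto\hat{e}$ is then the required bijection between $\b{A}(B,C)$ and the set of points of $\f{M}(S_1,S_2)$.

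Finally I would check naturality of this bijection in $(S_1,S_2)$, equivalently in $(B,C)$. This is forced by the naturality of the transformation $\f{h}$ asserted in Theorem \ref{5-131325}: the defining relation $e=(\r{id}\ot\hat{e})\f{h}(B,C)$ is compatible with pre- and post-composition by the structure morphisms $\f{A}(g,f)$, so the square relating the bijections at $(B,C)$ and at $(B',C')$ commutes. I do not expect any genuine difficulty here; the only points requiring care are purely formal, namely keeping the variance straight when transporting everything to the opposite categories and confirming that the one-object pro-algebra $\m{K}$ really does realize ordinary algebra morphisms $B\to C$ as pro-morphisms $B\to C\ot\m{K}$. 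The substantive content has already been discharged in Theorem \ref{5-131325}, so the corollary is indeed immediate.
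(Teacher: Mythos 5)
Your proposal is correct and is exactly the argument the paper intends: the paper offers no written proof, calling the corollary ``immediate'' from Theorem \ref{5-131325}, and what you have done is spell out that dualization—identifying $\b{S}_\r{fd}=\b{A}_\r{fg}^\r{op}$, $\b{indS}_\r{fd}=(\b{proA}_\r{fg})^\r{op}$, setting $\f{M}(\f{S}C,\f{S}B):=\f{S}\f{A}(B,C)$, and applying the universal property with $E=\m{K}$ to get the bijection on points. Your variance bookkeeping and the identification of pro-morphisms $B\to C\ot\m{K}$ with algebra morphisms $B\to C$ are both handled correctly, so nothing is missing.
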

\begin{remark}
\emph{\begin{enumerate}
\item[(i)] Let $\b{A}_\r{cfgred}$ denote the category of commutative finitely generated reduced algebras.
In the case that $\m{K}$ is algebraically closed, $\b{V}:=\b{A}_\r{cfgred}^\r{op}$ is equivalent to the category of ordinary affine varieties over $\m{K}$.
Indeed, for any algebra $A$ in $\b{A}_\r{cfgred}$, there is a canonical bijection between $\b{A}(A,\m{K})$ and closed
points of the prim spectrum of $A$.
\item[(ii)] Let $\f{c}:\b{A}\to\b{A}_\r{c}$ be the functor that associates to any algebra its quotient by the commutator ideal,
and let $\f{r}:\b{A}_\r{c}\to\b{A}_\r{cred}$ be the functor that associates to any commutative algebra its quotient by the nil radical.
We denote by the same symbols the canonical extensions $\f{c}:\b{proA}_\r{fg}\to\b{proA}_\r{cfg}$ and $\f{r}:\b{proA}_\r{c}\to\b{proA}_\r{cred}$.
Then, it is clear that the functors
$$\f{cA}:\b{A}_\r{fg}\times\b{A}^\r{op}\to\b{proA}_\r{cfg},\quad\f{rcA}:\b{A}_\r{fg}\times\b{A}^\r{op}\to\b{proA}_\r{cfgred},$$
have universal properties as in Theorem \ref{5-131325}. We denote the dual functors by,
$$\f{M}_\r{c}:\b{S}^\r{op}\times\b{S}_\r{fd}\to\b{indS}_\r{cfd},\quad\f{M}_\r{cr}:\b{S}^\r{op}\times\b{S}_\r{fd}\to\b{indV}.$$
In the case that $S_1,S_2\in\b{V}$, the structure of the \emph{ind-affine variety} $\f{M}_\r{cr}(S_1,S_2)$ has been considered by some authors.
See, for instance, \cite{CherenaeckGuerra1}, \cite[Theorem 2.3.3]{KambayashiMiyanishi1}. It seems that our approach is
very more simple than the others.
\end{enumerate}}
\end{remark}
At the end of this section we mention two basic properties of $\f{M}$. For ind-schemes $S_1=\f{S}B_1,S_2=\f{S}B_2$, we let
$S_1\times S_2:=\f{S}(B_1\ot B_2)$. Thus in $\b{indS}_\r{c}$, $\times$ coincides with the categorical notion of product.
\begin{theorem}
\begin{enumerate}
\item[(i)]For $S\in\b{S},S_1,S_2\in\b{S}_\r{fd}$ there is a canonical isomorphism in $\b{indS}_\r{c}$,
$$\f{M}_\r{c}(S,S_1\times S_2)\cong\f{M}_\r{c}(S,S_1)\times\f{M}_\r{c}(S,S_2).$$
\item[(ii)] (Exponential Law) For $S\in\b{S}_\r{fd},S_1\in\b{S},S_2\in\b{S}_\r{fnt}$, there is a canonical isomorphism,
$$\f{M}(S_1\times S_2,S)\cong\f{M}(S_1,\f{M}(S_2,S)).$$
\end{enumerate}
\end{theorem}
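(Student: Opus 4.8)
The plan is to deduce both isomorphisms from the universal properties of Theorems \ref{1711241959} and \ref{5-131325} by a Yoneda argument, translating every assertion about schemes into its dual assertion about (pro-)algebras. For (i) I write $S=\f{S}C$ and $S_i=\f{S}B_i$, so that $S_1\times S_2=\f{S}(B_1\ot B_2)$; for (ii) I write $S=\f{S}B$, $S_1=\f{S}C_1$, $S_2=\f{S}C_2$. Recall that $\times$ dualizes to $\ot$ and that, by the defining universal property, a (pro-)morphism of the representing (pro-)algebra $\f{A}(-,-)$ into $E$ is the same datum as a (pro-)morphism of the source into the corresponding $C\ot E$.

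For (i), dualizing turns the claim into a natural isomorphism of commutative pro-algebras $\f{cA}(B_1\ot B_2,C)\cong\f{cA}(B_1,C)\ot\f{cA}(B_2,C)$. Since $\times$ is the categorical product in $\b{indS}_\r{c}$ (equivalently, $\ot$ is the coproduct of commutative pro-algebras), testing against an arbitrary commutative pro-algebra $E$ reduces the statement to a natural bijection between pro-morphisms $B_1\ot B_2\to C\ot E$ and pairs of pro-morphisms $B_i\to C\ot E$. One direction is restriction along the canonical embeddings $B_i\to B_1\ot B_2$; the candidate inverse sends a pair $(e_1,e_2)$ to $b_1\ot b_2\mapsto e_1(b_1)e_2(b_2)$. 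I expect the real obstacle to be showing that this candidate is an algebra morphism at all: writing $e_1(b_1)=\sum_i c_i\ot x_i$ and $e_2(b_2)=\sum_j c_j'\ot y_j$, multiplicativity amounts to the vanishing of $\sum_{i,j}(c_ic_j'-c_j'c_i)\ot x_iy_j$ (here $E$ is already commutative). This commutation of the two images is automatic exactly when the coefficient algebra is commutative, so the argument closes precisely under the hypothesis that $C$ is commutative, i.e. $S\in\b{S}_\r{c}$; under that hypothesis the map assembled from the projections $S_1\times S_2\to S_i$ is the asserted canonical isomorphism, and naturality in $E$ is routine.

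For (ii) the strategy is the usual exponential/adjunction computation, and dually the claim reads $\f{A}(B,C_1\ot C_2)\cong\f{A}(\f{A}(B,C_2),C_1)$ in $\b{proA}$. The first thing to record is that, because $S\in\b{S}_\r{fd}$ and $S_2\in\b{S}_\r{fnt}$, the inner object $\f{A}(B,C_2)$ is an ordinary, finitely generated algebra: the construction of Lemma \ref{5-121553} uses a finite generating set of $B$ and a finite basis of $C_2$, hence finitely many generators. This is exactly what places $\f{M}(S_2,S)$ in $\b{S}_\r{fd}$ and makes the outer $\f{M}(S_1,-)$ legitimate. Then, for any pro-algebra $E$, I would chain the universal properties: $$\b{proA}(\f{A}(\f{A}(B,C_2),C_1),E)\cong\b{proA}(\f{A}(B,C_2),C_1\ot E)\cong\{B\to C_2\ot(C_1\ot E)\}\cong\{B\to(C_1\ot C_2)\ot E\}\cong\b{proA}(\f{A}(B,C_1\ot C_2),E).$$ Here the outer two steps invoke Theorem \ref{5-131325} (both $B$ and $\f{A}(B,C_2)$ being finitely generated), the second step the universal property of the inner $\f{A}(B,C_2)$, and the penultimate step the symmetry and associativity isomorphism $C_2\ot(C_1\ot E)\cong(C_1\ot C_2)\ot E$. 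Yoneda then produces the canonical isomorphism.

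The step I expect to demand the most care in (ii) is the second identification: the universal property of $\f{A}(B,C_2)$ is stated in Theorem \ref{1711241959} for ordinary coefficient algebras, whereas here the coefficients $C_1\ot E$ form a pro-algebra. I would resolve this by appealing to Theorem \ref{5-131325} instead (legitimate since $B$ is finitely generated, so that $\f{A}(B,C_2)$ agrees, as a constant pro-object, with the finitely generated algebra of Theorem \ref{1711241959}), thereby extending the inner universal property to pro-algebra coefficients. Once this pro-object bookkeeping is aligned every bijection above is manifestly natural in $E$, and the isomorphism is independent of the chosen generators and bases; the only genuinely conceptual subtlety is the commutativity requirement already isolated in (i).
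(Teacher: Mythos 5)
The paper gives no argument for this theorem at all: its ``proof'' is a citation of Theorems 2.8 and 2.10 of \cite{Sadr1}, which treat the finite-dimensional case. Your self-contained universal-property/Yoneda argument is therefore the right way to make the claim checkable in the pro/ind setting, and for part (ii) it is correct and complete. You isolated the one genuine subtlety accurately: the inner universal property must be invoked against the pro-algebra coefficients $C_1\ot E$, and this is licensed by Theorem \ref{5-131325} because $B$ is finitely generated, while $\f{A}(B,C_2)$ --- generated by the finitely many symbols $x_{b,c}$ with $b\in G$ and $c$ in a basis of $C_2$ --- is an ordinary finitely generated algebra, i.e.\ a constant pro-object agreeing with the object of Theorem \ref{1711241959}; this also makes the outer functor applicable. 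Since the flip $C_2\ot C_1\cong C_1\ot C_2$ is an algebra isomorphism with no commutativity assumption, (ii) holds for arbitrary noncommutative $B,C_1,C_2$, exactly as stated.

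For part (i), your proof only establishes the isomorphism under the extra hypothesis $S\in\b{S}_\r{c}$, while the statement allows any $S\in\b{S}$. You should know that this restriction is not a defect of your method: the statement as printed is false for noncommutative $C$, so no argument could close that gap. Take $C=M_2(\m{K})$, so that $S=\f{S}C\in\b{S}_\r{fnt}\subset\b{S}$, and $B_1=B_2=\m{K}[x]$, so that $S_1=S_2\in\b{S}_\r{fd}$. Then $\f{cA}(B_1,C)\ot\f{cA}(B_2,C)$ is a polynomial algebra in the eight variables $a_{kl},b_{kl}$, whereas $B_1\ot B_2=\m{K}[x,y]$ and $\f{cA}(\m{K}[x,y],C)$ is the quotient of that polynomial algebra by the ideal $I$ generated by the four entries of the matrix commutator $(a_{kl})(b_{kl})-(b_{kl})(a_{kl})$, i.e.\ the coordinate ring of the scheme of commuting pairs of $2\times2$ matrices. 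Since $C$ is finite dimensional, both sides are constant pro-objects, so an isomorphism in $\b{indS}_\r{c}$ would be an isomorphism of algebras; but $I$ is a nonzero proper ideal (it contains $a_{12}b_{21}-a_{21}b_{12}$) of a finitely generated domain, so the quotient has strictly smaller Krull dimension than the polynomial ring, and no isomorphism exists, canonical or otherwise. Thus the hypothesis in (i) must be strengthened to $S\in\b{S}_\r{c}$ --- precisely where your argument closes, since any two morphisms into the commutative algebra $C\ot E$ have commuting images, and also the only hypothesis under which the cited Theorem 2.8 of \cite{Sadr1} can be correct. With that correction, your proof of (i) is complete as well.
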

\begin{proof}
For (i) see \cite[Theorem 2.8]{Sadr1}, and for (ii) see \cite[Theorem 2.10]{Sadr1}.
\end{proof}
\section{Quantum group ind-schemes of mappings}\label{1712071723}
In \cite{Sadr1}, we proved that if $H$ is a Hopf algebra and $C$ is a finite dimensional commutative algebra then $\f{A}(H,C)$
has a canonical Hopf algebra structure; that is, in the dual notations, if $G$ is a quantum group scheme and $S\in\b{S}_\r{cfnt}$
is a finite scheme then $\f{M}(S,G)$ has a canonical quantum group scheme structure. We call $\f{M}(S,G)$ quantum group scheme
of mappings from $S$ to $G$. It is easily seen that this construction is also functorial:
\begin{theorem}\label{1711281603}
The functor in (\ref{5-131237}) can be considered as a functor $\f{A}:\b{H}\times\b{A}^\r{op}_\r{cfd}\to\b{H}$.
In dual notations, we have a canonical functor $\f{M}:\b{S}^\r{op}_\r{cfnt}\times\b{G}\to\b{G}$.
\end{theorem}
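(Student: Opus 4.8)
The plan is to deduce everything from the universal property of Theorem~\ref{1711241959}, which characterizes each structure map on $A:=\f{A}(H,C)$ (for $H\in\b{H}$ and $C\in\b{A}_\r{cfd}$) as the unique algebra morphism out of $A$ intertwining $\f{h}=\f{h}(H,C):H\to C\ot A$ with a prescribed algebra morphism out of $H$. First I would recall, in this form, the Hopf structure of $A$ established in \cite{Sadr1}: the comultiplication $\Delta_A:A\to A\ot A$ is the unique morphism with $(\r{id}_C\ot\Delta_A)\f{h}=e_\Delta$, where $e_\Delta$ is the composite of $\Delta_H$, then $\f{h}\ot\f{h}$, then the symmetry $\tau$ interchanging the middle two tensorands of $(C\ot A)\ot(C\ot A)$, then $m_C\ot\r{id}_{A\ot A}$; the counit $\epsilon_A$ is determined by $(\r{id}_C\ot\epsilon_A)\f{h}=u_C\epsilon_H$ (identifying $C\ot\m{K}=C$, with $u_C$ the unit); and the antipode arises by applying the universal property with $E=A^\r{op}$ to the algebra morphism $\f{h}\,S_H:H\to C\ot A^\r{op}$. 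The commutativity of $C$ enters here twice and essentially: it is exactly what makes $m_C:C\ot C\to C$ an algebra morphism, so that $e_\Delta$ is a legitimate input to the universal property, and what gives $C^\r{op}=C$, so that $\f{h}\,S_H$ lands in $C\ot A^\r{op}$.

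Given this, the substance of the theorem is that the algebra morphism $\phi:=\f{A}(g,f):A\to A'=\f{A}(H',C')$ attached to a pair $(g,f)$ with $g\in\b{H}(H,H')$ and $f\in\b{A}_\r{cfd}(C',C)$ is in fact a Hopf morphism. Since a bialgebra morphism between Hopf algebras automatically commutes with antipodes, it suffices to check that $\phi$ is a coalgebra morphism, which removes the antipode from consideration entirely. Throughout I would use only the defining relation $(\r{id}_C\ot\phi)\f{h}=(f\ot\r{id}_{A'})\f{h}'g$ from Theorem~\ref{1711241959} together with its uniqueness clause: two morphisms $\theta_1,\theta_2:A\to E$ coincide once $(\r{id}_C\ot\theta_1)\f{h}=(\r{id}_C\ot\theta_2)\f{h}$.

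To verify comultiplication compatibility I would take $E=A'\ot A'$, $\theta_1=(\phi\ot\phi)\Delta_A$, $\theta_2=\Delta_{A'}\phi$, and compute $(\r{id}_C\ot\theta_i)\f{h}$. Expanding $\theta_2$ via the relation for $\phi$, the definition of $\Delta_{A'}$, and the fact that $g$ is a coalgebra morphism ($\Delta_{H'}g=(g\ot g)\Delta_H$) reduces it to $(f\ot\r{id})(m_{C'}\ot\r{id})\tau'(\f{h}'g\ot\f{h}'g)\Delta_H$; expanding $\theta_1$ and pushing $\phi\ot\phi$ through $\f{h}\ot\f{h}$ by the same relation reduces it to $(m_C\ot\r{id})\tau((f\ot\r{id})\ot(f\ot\r{id}))(\f{h}'g\ot\f{h}'g)\Delta_H$. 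These agree precisely because $f$ is multiplicative, i.e. $m_C(f\ot f)=f\,m_{C'}$. For the counit I would argue identically with $E=\m{K}$: unwinding $(\r{id}_C\ot\epsilon_{A'}\phi)\f{h}$ yields $f\,u_{C'}\epsilon_{H'}g$, which equals $u_C\epsilon_H=(\r{id}_C\ot\epsilon_A)\f{h}$ since $f$ is unital and $g$ preserves the counit. Preservation of identities and composition is inherited verbatim from Theorem~\ref{1711241959}, and the dual statement about $\f{M}$ is a mechanical rephrasing. I expect the only real obstacle to be bookkeeping: keeping the four tensorands and the symmetries $\tau,\tau'$ in matching order so the two reductions land on syntactically identical composites; conceptually the point is merely that the Hopf structure of $\f{A}(H,C)$ is transported along $g$ and $f$ using only that $g$ respects the coalgebra operations and $f$ respects the algebra operations.
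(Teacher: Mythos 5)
Your proposal is correct and matches the paper's (very terse) approach: the paper's proof is simply a citation of Theorem 5.5 of \cite{Sadr1} for the Hopf structure on $\f{A}(H,C)$, with functoriality declared ``easily seen,'' and your argument is exactly that easy verification carried out in full. Recalling the Hopf structure via the universal property, reducing the antipode to the standard fact that bialgebra morphisms of Hopf algebras preserve antipodes, and then checking the comultiplication and counit compatibilities through the uniqueness clause of Theorem \ref{1711241959} (where the two composites agree precisely by multiplicativity and unitality of $f$ and the coalgebra-morphism property of $g$) is the intended argument, and all your computations are sound.
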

\begin{proof}
See Theorem 5.5 of \cite{Sadr1}.
\end{proof}
Now, in order to extend the content of Theorem \ref{1711281603}, we introduce the notion of \emph{Hopf pro-algebra}.
A Hopf pro-algebra is a pro-algebra $A$ with two pro-morphisms
$\Delta:A\to A\ot A,\epsilon:A\to\m{K}$, and a pro-linear map $T:A\to A$, such that $\Delta,\epsilon,T$ satisfy
the similar identities for, respectively, comultiplication, counit and antipode, in the category $\b{proA}$.
A morphism between two Hopf pro-algebras is a pro-morphism in $\b{proA}$ which respects comultiplications and counits (and hence antipodes).
We denote by $\b{PH}$ the category of Hopf pro-algebras. (Note that $\b{PH}$ is different from the category $\b{proH}$.)
The objects of $\b{IG}:=\b{PH}^\r{op}$ are called \emph{quantum group ind-schemes}. The proof of the following result is a straightforward
combination of proofs of \cite[Theorem 5.5]{Sadr1} and Theorem \ref{5-131325}, and hence is omitted.
\begin{theorem}\label{1711301556}
The functor in (\ref{5-211757}) can be considered as a functor $\f{A}:\b{H}_\r{fg}\times\b{A}^\r{op}_\r{c}\to\b{PH}$.
In dual notations, we have a canonical functor $\f{M}:\b{S}^\r{op}_\r{c}\times\b{G}_\r{fd}\to\b{IG}$. Moreover, for every $S\in\b{S}_\r{c}$ and
$G\in\b{G}_\r{fd}$ there is a natural bijection between the set of mappings from $S$ to $G$, and the points of $\f{M}(S,G)$.
\end{theorem}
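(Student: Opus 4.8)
The plan is to endow the pro-algebra $A:=\f{A}(H,C)$ of Theorem \ref{5-131325} with co-operations $\Delta,\epsilon,T$, each extracted from the universal property of $(A,h)$, where $h:=\f{h}(H,C):H\to C\ot A$, and then to reduce the Hopf axioms to those of $H$. The commutativity of $C$ is what makes everything go: it is exactly the condition under which $m_C:C\ot C\to C$ is an algebra morphism and $C^\r{op}=C$. First I would build the comultiplication. The composite $(m_C\ot\r{id}_{A\ot A})\,\tau\,(h\ot h)\,\Delta_H:H\to C\ot(A\ot A)$, in which $\tau$ interchanges the two middle factors of $C\ot A\ot C\ot A$, is a pro-morphism (each factor is, the last one because $C$ is commutative), so Theorem \ref{5-131325} applied with $E=A\ot A$ produces a unique pro-morphism $\Delta:A\to A\ot A$ with
$$(\r{id}_C\ot\Delta)\,h=(m_C\ot\r{id}_{A\ot A})\,\tau\,(h\ot h)\,\Delta_H.$$
The counit $\epsilon:A\to\m{K}$ is obtained in the same manner from $H\xrightarrow{\epsilon_H}\m{K}\to C$ with $E=\m{K}$, so that $(\r{id}_C\ot\epsilon)h=\eta_C\epsilon_H$.

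The antipode needs more care and is where the real subtlety sits. Since the antipode $T_H$ of $H$ is an anti-morphism, $hT_H:H\to C\ot A$ reverses multiplication, hence is an algebra morphism into $C\ot A^\r{op}=(C\ot A)^\r{op}$, again using $C^\r{op}=C$. Applying Theorem \ref{5-131325} with $E=A^\r{op}$ yields a pro-morphism $T:A\to A^\r{op}$, i.e.\ a pro-linear anti-morphism $A\to A$, characterized by $(\r{id}_C\ot T)h=hT_H$. This mirrors the finite-dimensional construction of \cite[Theorem 5.5]{Sadr1}.

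Next I would check the bialgebra axioms: coassociativity, the two counit laws, and that $\Delta,\epsilon$ are algebra morphisms. Each is an \emph{equality of pro-morphisms}, so the uniqueness clause of Theorem \ref{5-131325} reduces it to the matching identity composed with $h$; for instance coassociativity of $\Delta$ follows from that of $\Delta_H$ together with associativity of $m_C$. The genuine obstacle is the antipode axiom $m_A(T\ot\r{id})\Delta=\eta_A\epsilon_A=m_A(\r{id}\ot T)\Delta$, because $T*\r{id}_A:=m_A(T\ot\r{id})\Delta$ is \emph{not} an algebra morphism, so uniqueness cannot be invoked. Here I would argue in two steps. Using the defining relations and multiplicativity of $h$, a direct computation collapses $(\r{id}_C\ot(T*\r{id}_A))h$ to the map $b\mapsto\sum h(T_H(b_1))h(b_2)=h\big(\sum T_H(b_1)b_2\big)=h(\epsilon_H(b)1_H)=\epsilon_H(b)\,1_{C\ot A}$, which is precisely $(\r{id}_C\ot\eta_A\epsilon_A)h$; since $h(b)=\sum_c c\ot x_{b,c}$ with the $c$ linearly independent, comparing coefficients forces $T*\r{id}_A$ and $\eta_A\epsilon_A$ to agree on the generators $x_{b,c}$. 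The second step is the standard fact that the set of elements satisfying the antipode identity is a subalgebra -- this uses exactly that $T$ is an anti-morphism and that $\Delta,\epsilon$ are algebra morphisms -- so agreement on generators propagates to all of $A$; the symmetric computation disposes of $m_A(\r{id}\ot T)\Delta$.

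Finally, functoriality and the point correspondence are routine. For a morphism $(g,f)$ in $\b{H}_\r{fg}\times\b{A}^\r{op}_\r{c}$, the induced pro-morphism $\f{A}(g,f)$ of Theorem \ref{5-131325} commutes with $\Delta$ and $\epsilon$ by another appeal to uniqueness (using that $g$ respects $\Delta_H,\epsilon_H$), hence is a morphism in $\b{PH}$; this gives the functor $\f{A}:\b{H}_\r{fg}\times\b{A}^\r{op}_\r{c}\to\b{PH}$ and dually $\f{M}:\b{S}^\r{op}_\r{c}\times\b{G}_\r{fd}\to\b{IG}$. The asserted bijection between mappings $S\to G$ and points of $\f{M}(S,G)$ is just the case $E=\m{K}$ of the universal property, exactly as in Corollary \ref{1711281216}. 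I expect the only real work to be the antipode axiom, and in particular the passage from generators to all of $A$ in the pro-algebra setting.
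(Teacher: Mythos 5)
Your proposal is correct and takes essentially the same route as the paper: the paper omits the proof, describing it precisely as a straightforward combination of the proof of \cite[Theorem 5.5]{Sadr1} (construction of $\Delta,\epsilon,T$ via the universal property of $(\f{A}(H,C),\f{h}(H,C))$, with commutativity of $C$ making $m_C$ an algebra morphism) with the pro-algebra universal property of Theorem \ref{5-131325}, which is exactly what you carry out. Your treatment of the antipode axiom -- coefficient comparison on the generators $x_{b,c}$ followed by the subalgebra-propagation argument, executed level-wise on representing morphisms -- is the right way to handle the one step where uniqueness cannot be invoked, and it goes through in the pro setting as you anticipate.
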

As an example, we describe the quantum group ind-scheme $\f{M}(S,G)$ in a very simple case that $S$ is the ordinary affine $m$-space $\mathbb{A}_m$,
and $G$ is a \emph{pseudogroup} or \emph{matrix quantum group} in the sense of Woronowicz \cite{Woronowicz1}. We first restate the definition of
pseudogroup \cite{Woronowicz1} in a purely algebraic setting. A pseudogroup $G=\f{S}B$ is described by a pair $(B,(u_{kl}))$ where $B\in\b{A}_\r{fg}$
and $(u_{kl})$ is a $n\times n$ matrix with entries in $B$ such that,
\begin{enumerate}
\item[(i)] $B$ is generated by the entries of $(u_{kl})$,
\item[(ii)] the assignment $u_{kl}\mapsto\sum_{r=1}^{n}u_{kr}\ot u_{rl}$ defines an algebra morphism $\Delta:B\to B\ot B$, and
\item[(iii)] there is a linear anti-multiplicative map $T:B\to B$ satisfying $T^2=\r{id}_B$ and
$\sum_{r=1}^nT(u_{kr})u_{rl}=\sum_{r=1}^nu_{kr}T(u_{rl})=\delta_{kl}$, where $\delta_{kl}$ is Kronecker delta.
\end{enumerate}
It can be proved that the assignment $u_{kl}\mapsto\delta_{kl}$ defines an algebra morphism $\epsilon:B\to\m{K}$, and
$B$ is a Hopf algebra with comultiplication $\Delta$, counit $\epsilon$, and antipode $T$.

Let $G=\f{S}B$ be a pseudogroup as above, and let $S=\m{A}_m:=\f{S}C$ where $C=\m{K}[x_1,\ldots,x_m]$ is the commutative polynomial algebra
with $m$ variables. We are going to construct a model for Hopf pro-algebra $A=\f{A}(B,C)$. For every integer $p\geq1$, let $A_p$ denote the algebra
generated by the symbols $a_{p,k,l,i_1,\ldots,i_m}$, with $1\leq k,l\leq n,0\leq i_j,\sum_{j=1}^mi_j\leq p$, such that,
$$u_{kl}\mapsto\sum_{i_1+\cdots+i_m\leq p}x_1^{i_1}\cdots x_m^{i_m}\ot a_{p,k,l,i_1,\ldots,i_m},$$
defines an algebra morphism $h_p:B\to C\ot A_p$. The universality of $A_p$ shows that for $p\leq p'$, the assignment
$a_{p',k,l,i_1,\ldots,i_m}\mapsto z$, where $z=a_{p,k,l,i_1,\ldots,i_m}$ when $\sum_{j=1}^mi_j\leq p$, and $z=0$ when $\sum_{j=1}^mi_j>p$,
defines an algebra morphism $A^{p'}_p:A_{p'}\to A_p$. Thus, the family $\{A_p,A^{p'}_p\}$ of algebras and morphisms
defines the underlying pro-algebra of the Hopf pro-algebra $A$. For every $p$, the assignment,
$$a_{2p,k,l,i_1,\ldots,i_m}\mapsto\sum_{r=1}^n\sum_{t_j+s_j=i_j}a_{p,k,r,t_1,\ldots,t_m}\ot a_{p,r,l,s_1,\ldots,s_m},$$
defines an algebra morphism $\hat{\Delta}_p:A_{2p}\to A_p\ot A_p$, and the family $\hat{\Delta}=\{\hat{\Delta}_p\}_{p\geq1}$
is a represented pro-morphism from $A$ to $A\ot A$. It follows from \cite[Lemma 2.11]{Sadr1} that for every $p$, $T$ induces
a linear anti-multiplicative mapping $\hat{T}_p:A_p\to A_p$, and hence, the family $\hat{T}=\{\hat{T}\}_{p\geq1}$ is a represented pro-linear
mapping from $A$ to $A$. For every $p$, the assignments $a_{p,k,l,0,\ldots,0}\mapsto\delta_{kl}$ and $a_{p,k,l,i_1,\ldots,i_m}\mapsto0$,
when $(i_1,\ldots,i_m)\neq(0,\ldots,0)$, define an algebra morphism $\hat{\epsilon}_p:A_p\to\m{K}$. Also,
$\hat{\epsilon}=\{\hat{\epsilon}_p\}_{p\geq1}$ is a represented pro-morphism from $A$ to $\m{K}$. Now, it is easily seen that $A$
is a Hopf pro-algebra with comultiplication $[\hat{\Delta}]$, antipode $[\hat{T}]$, and counit $[\hat{\epsilon}]$.
\section{ind-schemes of $G$-mappings}\label{1712031224}
Let $H\in\b{H}$ be a Hopf algebra with comultiplication $\Delta$ and counit $\epsilon$. We denote its corresponding
quantum group scheme by $G:=\f{S}H$. A \emph{$H$-comodule} is an algebra $V\in\b{A}$ with a coaction $\rho:V\to V\ot H$ satisfying
$(\rho\ot\r{id}_H)\rho=(\r{id}_V\ot\Delta)\rho$ and $(\r{id}_V\ot\epsilon)\rho=\r{id}_V$. We denote the category of $H$-comodules by $H/\b{M}$.
A morphism in $H/\b{M}$, is an algebra morphism which respects coactions. We call $G/\b{S}:=H/\b{M}^\r{op}$ the category of \emph{$G$-schemes}.
Morphisms in $G/\b{S}$ are called \emph{$G$-mappings}. Let $V,W$ be in $H/\b{M}$ with coactions $\rho:V\to V\ot H,\varrho:W\to W\ot H$.
By a \emph{(pro-)family of $H$-comodule morphisms} from $W$ to $V$, we mean a pair $(U,\phi)$ where $U$ is a (pro-)algebra and
$\phi:W\to V\ot U$ is a (pro-)algebra morphism satisfying
$$(\r{id}_V\ot F)(\rho\ot\r{id}_U)\phi=(\phi\ot\r{id}_H)\varrho,$$
where $F$ denotes flip between components of tensor product. In dual language, we can consider $(\f{S}U,\f{S}\phi)$ as
a \emph{(ind-)family of $G$-mappings} from $\f{S}V$ to $\f{S}W$, see \cite[Section 4]{Sadr1}.
\begin{lemma}\label{1712031009}
Let $D=({W},V,\{\delta_w\}_{w\in Q})$ be an object in $\b{D}$. Suppose that $V,W$ have $H$-comodule structures.
Then there exist an algebra $\tilde{\f{A}}^\dag(D)=A$ and a morphism $\tilde{\f{h}}^\dag(D)=h:{W}\to V\ot A$ satisfying the following
three properties:
\begin{enumerate}
\item[(i)] $(A,h)$ is a family of $H$-comodule morphisms from $W$ to $V$.
\item[(ii)] For every $w\in Q$, $h(w)$ is a sum of elements of the form $v\ot a$ with $v\in\delta_w,a\in A$.
\item[(iii)] The pair $(A,h)$ is universal with respect to the properties (i) and (ii).
\end{enumerate}
\end{lemma}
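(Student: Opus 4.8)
The plan is to realize the desired pair $(A,h)=(\tilde{\f{A}}^\dag(D),\tilde{\f{h}}^\dag(D))$ as a quotient of the universal pair $(\tilde{\f{A}}(D),\tilde{\f{h}}(D))$ produced by Lemma \ref{5-121553}: one imposes on $\tilde{\f{A}}(D)$ precisely the extra relations that force the coaction-compatibility (i), and then checks that the universal property of Lemma \ref{5-121553} for property (ii) alone descends to the quotient together with the new constraint. Write $\tilde A:=\tilde{\f{A}}(D)$ and $\tilde h:=\tilde{\f{h}}(D):W\to V\ot\tilde A$, so that $\tilde h(w)=\sum_{v\in\delta_w}v\ot x_{w,v}$ on each generator $w\in Q$.

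The central observation is that both sides of the defining identity for a family of $H$-comodule morphisms,
$$(\r{id}_V\ot F)(\rho\ot\r{id}_{\tilde A})\tilde h\quad\text{and}\quad(\tilde h\ot\r{id}_H)\varrho,$$
are algebra morphisms $W\to V\ot\tilde A\ot H$, being composites of the algebra morphisms $\rho,\varrho,\tilde h$ and the flip $\r{id}_V\ot F$. Since $Q$ generates $W$, these two morphisms coincide if and only if they agree on each $w\in Q$. Fixing a basis $\{e_\lambda\}$ of $V$ and a basis $\{f_\mu\}$ of $H$ and expanding both morphisms applied to $w$ in the basis $\{e_\lambda\ot f_\mu\}$ of $V\ot H$, I obtain, for each $w\in Q$ and each pair $(\lambda,\mu)$, two elements of $\tilde A$ whose difference $r_{w,\lambda,\mu}$ must vanish; only finitely many are nonzero for each $w$. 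I then let $I\subseteq\tilde A$ be the two-sided ideal generated by all the $r_{w,\lambda,\mu}$, set $A:=\tilde A/I$ with quotient map $\pi$, and define $h:=(\r{id}_V\ot\pi)\tilde h$. Property (ii) is immediate since $h(w)=\sum_{v\in\delta_w}v\ot\pi(x_{w,v})$, and property (i) holds by the construction of $I$, because the two algebra morphisms now agree on all generators of $W$.

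The remaining, and most delicate, step is the universal property (iii). Given any pair $(A',h')$ satisfying (i) and (ii), Lemma \ref{5-121553} furnishes a unique $\alpha_0:\tilde A\to A'$ with $h'=(\r{id}_V\ot\alpha_0)\tilde h$. I would show $\alpha_0(I)=0$ by a naturality computation: using $h'=(\r{id}_V\ot\alpha_0)\tilde h$ one checks that each side of (i) for $h'$ equals $\r{id}_V\ot\alpha_0\ot\r{id}_H$ applied to the corresponding side of (i) for $\tilde h$. Since $h'$ satisfies (i), applying $\r{id}_V\ot\alpha_0\ot\r{id}_H$ to the coefficient expansion kills each contribution, and linear independence of the $e_\lambda\ot f_\mu$ forces $\alpha_0(r_{w,\lambda,\mu})=0$ for all indices. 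Hence $\alpha_0$ descends to a unique $\alpha:A\to A'$ with $\alpha\pi=\alpha_0$, giving $h'=(\r{id}_V\ot\alpha)h$; uniqueness of $\alpha$ follows from surjectivity of $\pi$ together with uniqueness of $\alpha_0$.

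The main obstacle I anticipate is bookkeeping rather than conceptual: verifying cleanly that the two candidate maps in (i) are genuine algebra morphisms (so that testing on $Q$ suffices) and that the ideal $I$ is natural enough for the factorization $\alpha_0(I)=0$ to go through. Both hinge on the single fact that $\rho$ and $\varrho$ are algebra morphisms and that $\tilde h$ is universal for property (ii) alone; once this is isolated, the descent of the universal property is formal.
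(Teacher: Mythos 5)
Your proposal is correct and is in essence the paper's own proof: the paper disposes of this lemma with ``similar to the proof of Lemma \ref{5-121553}'', meaning $A$ is the universal algebra on the symbols $x_{w,v}$ ($w\in Q$, $v\in\delta_w$) subject to both the relations making $w\mapsto\sum_{v\in\delta_w}v\ot x_{w,v}$ an algebra morphism and the relations forcing the $H$-comodule compatibility --- which is exactly your quotient $\tilde{\f{A}}(D)/I$. Your explicit checks (that both sides of the compatibility identity are algebra morphisms so testing on $Q$ suffices, and that the universal property descends through $\pi$ because $\alpha_0$ kills the coefficient relations $r_{w,\lambda,\mu}$) are precisely the routine details the paper leaves implicit.
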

\begin{proof}
Similar to the proof of Lemma \ref{5-121553}.
\end{proof}
In the same manner that we proved Theorems \ref{1711241959} and \ref{5-131325} by applying Lemma \ref{5-121553}, we find the following theorems
by applying Lemma \ref{1712031009}. The proofs will be omitted for brevity.
\begin{theorem}\label{1712031023}
Let $H$ be a Hopf-algebra.
\begin{enumerate}
\item[(i)] There exist a functor $\f{A}^\dag$ and a natural transformation $\f{h}^\dag$,
$$\f{A}^\dag:H/\b{M}\times H/\b{M}^\r{op}_\r{fd}\to\b{A},\quad(W,V)\mapsto[\f{h}^\dag(W,V):W\to V\ot\f{A}^\dag(W,V)],$$
such that for every family $\phi:W\to V\ot U$ of $H$-comodule morphisms from $W$ to $V$, there is a unique morphism $\hat{\phi}:\f{A}^\dag(W,V)\to U$
satisfying $\phi=(\r{id}\ot\hat{\phi})\f{h}^\dag(W,V)$.
\item[(ii)] There exist a functor $\f{A}^\dag$ and a natural transformation $\f{h}^\dag$,
$$\f{A}^\dag:H/\b{M}_\r{fg}\times H/\b{M}^\r{op}\to\b{proA}_\r{fg},\quad(W,V)\mapsto[\f{h}^\dag(W,V):W\to V\ot\f{A}^\dag(W,V)],$$
such that for every pro-family $\phi:W\to V\ot U$ of $H$-comodule morphisms from $W$ to $V$,
there is a unique pro-morphism $\hat{\phi}:\f{A}^\dag(W,V)\to U$
satisfying $\phi=(\r{id}\ot\hat{\phi})\f{h}^\dag(W,V)$.
\end{enumerate}
\end{theorem}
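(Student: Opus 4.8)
The plan is to run, word for word, the two-stage argument already used for Theorems \ref{1711241959} and \ref{5-131325}, with Lemma \ref{1712031009} substituted for Lemma \ref{5-121553} throughout. The first stage is to promote Lemma \ref{1712031009} to a functorial statement, the $H$-equivariant analogue of Lemma \ref{p2}: for a morphism $(g,f):D\to D'$ in $\b{D}$ whose components are comodule morphisms, the universality in part (iii) yields a unique $\tilde{\f{A}}^\dag(g,f):\tilde{\f{A}}^\dag(D)\to\tilde{\f{A}}^\dag(D')$ intertwining the canonical families, so that $\tilde{\f{A}}^\dag$ is a functor and $\tilde{\f{h}}^\dag$ a natural transformation. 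Precomposing with the functor $\f{d}$ of (\ref{5-291249}), restricted to data carrying comodule structures, then reproduces the equivariant version of Theorem \ref{5-301727}: a universal family $\f{h}^\dag(W_Q,V_L):W\to V\ot\tilde{\f{A}}^\dag\f{d}(W_Q,V_L)$ characterized by the support condition (ii) and the comodule condition (i) of Lemma \ref{1712031009}.

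For part (i) I would fix $W\in H/\b{M}$, $V\in H/\b{M}_\r{fd}$, a generating set $Q$ of $W$ and a vector-space basis $V'$ of $V$, and set $\f{A}^\dag(W,V):=\tilde{\f{A}}^\dag\f{d}(W_Q,V_{V'})$. Since $V'$ is a basis, condition (ii) is satisfied automatically by every family, so the universal property collapses to exactly the assertion made: for each family $\phi:W\to V\ot U$ of comodule morphisms there is a unique $\hat\phi$ with $\phi=(\r{id}\ot\hat\phi)\f{h}^\dag(W,V)$. Independence from the choices of $Q$ and $V'$, and thus well-definedness of the functor on $H/\b{M}\times H/\b{M}^\r{op}_\r{fd}$, follows from this universal property just as in Theorem \ref{1711241959}.

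For part (ii) I would fix $W\in H/\b{M}_\r{fg}$ with a \emph{finite} generating set $Q$ and $V\in H/\b{M}$ with basis $V'$, and build the pro-algebra indexed by the directed set $\b{J}$ of finite subsets $L\subseteq V'$, with $A_L:=\tilde{\f{A}}^\dag\f{d}(W_Q,V_L)$ and transition maps $A^{L_2}_{L_1}:=\tilde{\f{A}}^\dag\f{d}(\r{id},\r{id})$ coming from the inclusions $L_1\subseteq L_2$, exactly as in Theorem \ref{5-131325}. Each $A_L$ has the finitely many generators $x_{w,v}$ with $w\in Q$, $v\in L$, so $\f{A}^\dag(W,V)$ lands in $\b{proA}_\r{fg}$. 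The universal property is then obtained by the pro-morphism argument of Theorem \ref{5-131325}: decompose a pro-family $\phi=[\Psi]$ into representatives $\psi:W\to V\ot U_i$, use finiteness of $Q$ to find a finite $L_\psi\in\b{J}$ supporting $\psi$, invoke part (iii) of Lemma \ref{1712031009} for a unique $\omega_\psi:A_{L_\psi}\to U_i$, and assemble the $\omega_\psi$ into a represented pro-morphism whose class is the required $\hat\phi$; uniqueness follows from the usual compatibility check.

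The step I expect to be the genuine obstacle, and the only point where the equivariant case departs from the earlier proofs, is the behaviour of the comodule-compatibility equation $(\r{id}_V\ot F)(\rho\ot\r{id}_U)\phi=(\phi\ot\r{id}_H)\varrho$ under passage to representatives in part (ii). Since $U$ is a pro-algebra, this is an equality of pro-morphisms, so an individual representative $\psi:W\to V\ot U_i$ need not satisfy it on the nose; it holds only after composition with a suitable transition map. Because part (iii) of Lemma \ref{1712031009} requires an honest comodule-morphism family, before extracting $\omega_\psi$ I must first replace $\psi$ by its composite with such a transition map, obtaining a representative into some $V\ot U_{i'}$ that satisfies the equation exactly. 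Here again finiteness of $Q$ is what makes this possible: the equation need only be forced on the finitely many generators in $Q$, so a single transition index suffices simultaneously for all of them. Once this refinement is in place the pointwise universality applies verbatim and the remainder of the construction is identical to the non-equivariant case.
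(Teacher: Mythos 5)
Your proposal is correct and is precisely the argument the paper intends: the paper gives no separate proof of Theorem \ref{1712031023}, stating only that it follows by repeating the proofs of Theorems \ref{1711241959} and \ref{5-131325} with Lemma \ref{1712031009} in place of Lemma \ref{5-121553}, and your first three paragraphs carry out exactly that, including the functoriality step (the analogue of Lemma \ref{p2}), the reduction to a generating set $Q$ and basis $V'$, the observation that the support condition is automatic for a full basis, and the pro-algebra indexed by finite subsets $L\subseteq V'$.

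The only place you depart from this template --- the ``genuine obstacle'' of your last paragraph --- is a phantom problem. In part (ii) the domain $W$ is an honest algebra, i.e.\ a pro-algebra over a one-point directed set. By the paper's definitions, a represented pro-morphism from $W$ to a pro-algebra $D$ is a family of morphisms $W\to D_j$ compatible under the transition maps of $D$ (the source transition maps are identities), and equivalence of two such represented pro-morphisms forces their members at a common index $j=j'$ to coincide \emph{on the nose}. Now the two sides of $(\r{id}_V\ot F)(\rho\ot\r{id}_U)\phi=(\phi\ot\r{id}_H)\varrho$ have natural representatives at every index $i$ at which $\Psi$ has a member $\psi$, namely $(\r{id}_V\ot F)(\rho\ot\r{id}_{U_i})\psi$ and $(\psi\ot\r{id}_H)\varrho$; equality of the two pro-morphisms therefore forces these to be equal exactly, so every representative $\psi$ of a pro-family is already an honest family of $H$-comodule morphisms into $V\ot U_i$, and the refinement you insert before invoking Lemma \ref{1712031009}(iii) is vacuous (the required transition map is the identity). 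This is fortunate, because the repair as you describe it is itself shaky on its own terms: composing representatives with transition maps moves them to \emph{lower} indices, which can destroy the cofinality condition (i) in the definition of a represented pro-morphism, and an index working simultaneously for all of $Q$ would require a lower bound of finitely many indices, which a directed set need not possess. None of this is needed; with the last paragraph deleted, your proof is the paper's.
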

In dual language, the content of Theorem \ref{1712031023} is stated as follows.
\begin{corollary}
Let $G$ be a quantum group scheme.
\begin{enumerate}
\item[(i)] We have a canonical functor $\f{M}^\dag:G/\b{S}_\r{fnt}^\r{op}\times G/\b{S}\to\b{S}$ such that for $G$-schemes
$S_1\in G/\b{S}_\r{fnt},S_2\in G/\b{S}$ there is a natural bijection between the set of $G$-mappings from $S_1$ to $S_2$
and the set of points of $\f{M}^\dag(S_1,S_2)$. Thus, $\f{M}^\dag(S_1,S_2)$ is called the \emph{scheme of $G$-mappings from $S_1$ to $S_2$}.
\item[(ii)] We have a canonical functor $\f{M}^\dag:G/\b{S}^\r{op}\times G/\b{S}_\r{fd}\to\b{indS}_\r{fd}$ such that for $G$-schemes
$S_1\in G/\b{S},S_2\in G/\b{S}_\r{fd}$ there is a natural bijection between the set of $G$-mappings from $S_1$ to $S_2$
and the set of points of $\f{M}^\dag(S_1,S_2)$. Thus, $\f{M}^\dag(S_1,S_2)$ is called the \emph{ind-scheme of $G$-mappings from $S_1$ to $S_2$}.
\end{enumerate}
\end{corollary}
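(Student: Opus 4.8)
The plan is to read off the corollary as the image of Theorem \ref{1712031023} under the renaming duality functor $\f{S}$, in exactly the way that Corollaries \ref{1711281203} and \ref{1711281216} were obtained from Theorems \ref{1711241959} and \ref{5-131325}. First I would define $\f{M}^\dag$ to be the functor induced by $\f{A}^\dag$ on the opposite categories, with the two arguments interchanged so as to respect the ``from--to'' reading of mapping schemes. Concretely, for part (i) I use the identifications $G/\b{S}=H/\b{M}^\r{op}$, $G/\b{S}_\r{fnt}=(H/\b{M}_\r{fd})^\r{op}$, and $\b{S}=\b{A}^\r{op}$ to transport the first functor $\f{A}^\dag:H/\b{M}\times H/\b{M}^\r{op}_\r{fd}\to\b{A}$ of Theorem \ref{1712031023} into a functor $\f{M}^\dag:G/\b{S}_\r{fnt}^\r{op}\times G/\b{S}\to\b{S}$, so that $\f{M}^\dag(\f{S}V,\f{S}W)=\f{S}\f{A}^\dag(W,V)$. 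For part (ii) the same recipe applied to the second functor, together with $\b{indS}_\r{fd}=\b{proA}_\r{fg}^\r{op}$, yields $\f{M}^\dag:G/\b{S}^\r{op}\times G/\b{S}_\r{fd}\to\b{indS}_\r{fd}$. Functoriality of $\f{M}^\dag$ is then immediate from that of $\f{A}^\dag$.

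The substance of the statement is the natural bijection, and I would obtain it by specializing the universal property of Theorem \ref{1712031023} to the trivial parameter $U=\m{K}$. Writing $S_1=\f{S}V$ and $S_2=\f{S}W$, with $V$ an object of $H/\b{M}_\r{fd}$ (respectively $W$ in $H/\b{M}_\r{fg}$ for part (ii)), a $G$-mapping from $S_1$ to $S_2$ is by definition an $H$-comodule morphism $\phi:W\to V$. The key observation is that such a morphism is precisely a (pro-)family of $H$-comodule morphisms whose parameter (pro-)algebra is $\m{K}$: under the unit isomorphism $V\ot\m{K}\cong V$ the defining compatibility condition $(\r{id}_V\ot F)(\rho\ot\r{id}_\m{K})\phi=(\phi\ot\r{id}_H)\varrho$ collapses to $\rho\phi=(\phi\ot\r{id}_H)\varrho$, which is exactly the condition that $\phi$ respect the coactions. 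Hence, applying the universal property of the pair $(\f{A}^\dag(W,V),\f{h}^\dag(W,V))$ with $U=\m{K}$, every comodule morphism $\phi$ factors uniquely through a (pro-)morphism $\hat{\phi}:\f{A}^\dag(W,V)\to\m{K}$, and conversely every such $\hat{\phi}$ produces a $\phi$ by the formula $\phi=(\r{id}\ot\hat{\phi})\f{h}^\dag(W,V)$. Since a point of $\f{M}^\dag(S_1,S_2)=\f{S}\f{A}^\dag(W,V)$ is, by the definition of points of (ind-)schemes, precisely a (pro-)morphism $\f{A}^\dag(W,V)\to\m{K}$, this gives the desired bijection in both cases simultaneously. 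Naturality of the bijection in each argument follows from the naturality of $\f{h}^\dag$ asserted in Theorem \ref{1712031023}, together with the uniqueness clause of the universal property, which forces the assignment $\phi\mapsto\hat{\phi}$ to be compatible with the functorial maps induced by morphisms of comodules.

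The only genuinely technical point I expect to meet is the verification that the family condition degenerates correctly at $U=\m{K}$: one must track the flip $F$ and the unit isomorphism $V\ot\m{K}\cong V$ carefully to confirm that no residual twist by $H$ survives and that one recovers exactly the comodule-morphism equation. Once this bookkeeping is settled the remainder is formal, and indeed the whole argument is a transcription into dual language, which is why the proof is essentially the application of $\f{S}$ to the already-established Theorem \ref{1712031023}.
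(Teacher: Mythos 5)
Your proposal is correct and follows essentially the same route as the paper: the corollary is exactly the dual-language restatement of Theorem \ref{1712031023}, with the bijection obtained by specializing the universal property to the trivial parameter (pro-)algebra $\m{K}$, just as Corollaries \ref{1711281203} and \ref{1711281216} are read off from Theorems \ref{1711241959} and \ref{5-131325}. The paper treats this as immediate and records no separate argument, so your additional bookkeeping (checking that the family condition at $U=\m{K}$ collapses to the comodule-morphism equation) only makes explicit what the paper leaves implicit.
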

\section{ind-schemes of quantum group scheme morphisms}\label{1712071728}
Let $H_1,H_2\in\b{H}$ be Hopf-algebras respectively with comultiplications $\Delta_1,\Delta_2$.
We let $G_1:=\f{S}H_1,G_2:=\f{S}H_2$ denote their associated quantum group schemes.
By a \emph{(pro-)family of Hopf-algebra morphisms} from $H_1$ to $H_2$ we mean a pair $(R,\psi)$ where $R$ is a commutative (pro-)algebra and
$\psi:H_1\to H_2\ot R$ is a (pro-)algebra morphism satisfying
$$(\Delta_2\ot\r{id}_R)\psi=(\r{id}_{H_2\ot H_2}\ot\mu_R)(\r{id}_{H_2}\ot F\ot\r{id}_R)(\psi\ot\psi)\Delta_1,$$
where $\mu_R:R\ot R\to R$ denotes the multiplication of $R$, and $F$ denotes flip between tensor product components.
(Note that since $R$ is commutative, $\mu_R$  is a (pro-)algebra morphism.)
It is natural to call $(\f{S}R,\f{S}\psi)$ a \emph{(ind-)family of group homomorphisms} from $G_2$ to $G_1$, see \cite[Section 4]{Sadr1}.
Similar to Lemma \ref{5-121553}, we have the lemma below. The following theorem is concluded by applying this lemma as in Section \ref{1712031224}.
All the proofs will be omitted for brevity.
\begin{lemma}\label{1712031229}
Let $D=({H_1},H_2,\{\delta_t\}_{t\in Q})$ be an object in $\b{D}$. Suppose that $H_1,H_2$ have Hopf-algebra structures.
Then there exist a commutative algebra $\tilde{\f{A}}^\ddag(D)=A$ and a morphism $\tilde{\f{h}}^\ddag(D)=h:{H_1}\to H_2\ot A$
satisfying the following three properties:
\begin{enumerate}
\item[(i)] $(A,h)$ is a family of Hopf-algebra morphisms from $H_1$ to $H_2$.
\item[(ii)] For every $t\in Q$, $h(t)$ is a sum of elements of the form $s\ot a$ with $s\in\delta_t,a\in A$.
\item[(iii)] The pair $(A,h)$ is universal with respect to the properties (i) and (ii).
\end{enumerate}
\end{lemma}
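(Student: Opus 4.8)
The plan is to mirror the construction of Lemma \ref{5-121553}, together with its refinement in Lemma \ref{1712031009}, but to carry it out inside the \emph{commutative} category and to impose the comultiplicativity condition as an extra layer of relations. Fix $D=(H_1,H_2,\{\delta_t\}_{t\in Q})$, so that $Q$ generates $H_1$ and each $\delta_t$ is a finite linearly independent subset of $H_2$. First I would form the universal \emph{commutative} algebra $A_0$ generated by symbols $\{x_{t,s}:t\in Q,\ s\in\delta_t\}$ subject to exactly those relations forcing the assignment $t\mapsto\sum_{s\in\delta_t}s\ot x_{t,s}$ to extend to a unique algebra morphism $h_0:H_1\to H_2\ot A_0$. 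This is the verbatim commutative analogue of Lemma \ref{5-121553}: one starts from the free commutative algebra on the symbols and quotients by the ideal generated by the coefficients (read off against a basis of $H_2$) of $h_0(pq)-h_0(p)h_0(q)$ and of $h_0$ applied to the defining relations of $H_1$. The pair $(A_0,h_0)$ then satisfies property (ii) and is universal among commutative algebras equipped with such an algebra morphism.

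Next I would cut $A_0$ down by the comultiplicativity relation, and here is the one place commutativity is genuinely used. Because $A_0$ is commutative, $\mu_{A_0}$ is an algebra morphism, and hence both
$$\Lambda:=(\Delta_2\ot\r{id})h_0,\qquad\Theta:=(\r{id}_{H_2\ot H_2}\ot\mu_{A_0})(\r{id}_{H_2}\ot F\ot\r{id})(h_0\ot h_0)\Delta_1$$
are algebra morphisms from $H_1$ to $H_2\ot H_2\ot A_0$, each being a composite of tensor products of algebra morphisms, the flip $F$, and $\mu_{A_0}$. Two algebra morphisms out of $H_1$ agree once they agree on the generating set $Q$, so it suffices to impose $\Lambda(t)=\Theta(t)$ for $t\in Q$. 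Writing $\Lambda(t)-\Theta(t)=\sum_k e_k\ot p_{t,k}$ with $\{e_k\}$ a basis of $H_2\ot H_2$ and $p_{t,k}\in A_0$, I let $I$ be the ideal generated by all these coefficients, set $A:=A_0/I$, and let $h$ be the induced morphism. By construction $h$ is an algebra morphism satisfying (ii), and $\Lambda=\Theta$ now holds on generators and hence everywhere, giving property (i); this defines $\tilde{\f{A}}^\ddag(D)=A$ and $\tilde{\f{h}}^\ddag(D)=h$.

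For the universal property (iii), let $R'$ be a commutative algebra and $\psi:H_1\to H_2\ot R'$ a family of Hopf-algebra morphisms with each $\psi(t)$ a sum of terms $s\ot r$, $s\in\delta_t$. Linear independence of $\delta_t$ gives a unique expression $\psi(t)=\sum_{s\in\delta_t}s\ot r_{t,s}$, so the assignment $x_{t,s}\mapsto r_{t,s}$ is forced; by the universal property of $A_0$ it extends to an algebra morphism $\alpha_0:A_0\to R'$ with $(\r{id}\ot\alpha_0)h_0=\psi$. Since $\psi$ satisfies the comultiplicativity relation, applying $(\r{id}_{H_2\ot H_2}\ot\alpha_0)$ to $\Lambda(t)-\Theta(t)$ yields the corresponding difference for $\psi$, which vanishes; thus $\alpha_0(I)=0$ and $\alpha_0$ descends to the required $\alpha:A\to R'$. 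Uniqueness is immediate, as the classes of the $x_{t,s}$ generate $A$.

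The main obstacle, and the only genuinely new ingredient beyond Lemma \ref{5-121553}, is the reduction of the second paragraph showing that comultiplicativity can be encoded as relations on the generators. This rests entirely on the commutativity hypothesis: it is precisely because $\mu_{R'}$ (equivalently $\mu_{A_0}$) is an algebra morphism that $\Theta$ is an algebra morphism, which is what lets me test $\Lambda=\Theta$ on $Q$ rather than on all of $H_1$. The remaining checks—that the relations defining $A_0$ and the ideal $I$ are well posed, and that $h$ retains the compatibility after passage to the quotient—are routine and parallel to Section \ref{1712031224}, so I would omit them.
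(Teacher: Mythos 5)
Your proof is correct and takes essentially the approach the paper intends: the paper omits its own argument, saying only that the lemma is proved ``similar to Lemma \ref{5-121553}'', i.e.\ by a universal generators-and-relations construction, which is exactly what you carry out (in the commutative category, in two stages). Your key observation---that commutativity of $A_0$ makes both sides of the comultiplicativity identity algebra morphisms out of $H_1$, so the identity can be imposed as relations read off on the generating set $Q$ alone---is precisely the detail the paper leaves implicit, and your verification of the universal property is sound.
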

\begin{theorem}\label{1712031230}
\begin{enumerate}
\item[(i)] There exist a functor $\f{A}^\ddag$ and a natural transformation $\f{h}^\ddag$,
$$\f{A}^\ddag:\b{H}\times\b{H}^\r{op}_\r{fd}\to\b{A}_\r{c},\quad(H_1,H_2)\mapsto[\f{h}^\ddag(H_1,H_2):H_1\to H_2\ot\f{A}^\ddag(H_1,H_2)],$$
such that for every family $\psi:H_1\to H_2\ot R$ of Hopf-algebra morphisms,
there is a unique morphism $\hat{\psi}:\f{A}^\ddag(H_1,H_2)\to R$
satisfying $\psi=(\r{id}\ot\hat{\psi})\f{h}^\ddag(H_1,H_2)$.
\item[(ii)] There exist a functor $\f{A}^\ddag$ and a natural transformation $\f{h}^\ddag$,
$$\f{A}^\ddag:\b{H}_\r{fg}\times\b{H}^\r{op}\to\b{proA}_\r{cfg},\quad(H_1,H_2)\mapsto[\f{h}^\ddag(H_1,H_2):H_1\to H_2\ot\f{A}^\ddag(H_1,H_2)],$$
such that for every pro-family $\psi:H_1\to H_2\ot R$ of Hopf-algebra morphisms,
there is a unique pro-morphism $\hat{\psi}:\f{A}^\ddag(H_1,H_2)\to R$
satisfying $\psi=(\r{id}\ot\hat{\psi})\f{h}^\ddag(H_1,H_2)$.
\end{enumerate}
\end{theorem}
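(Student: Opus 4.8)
\emph{Proof proposal.} The plan is to reproduce, in the Hopf setting, the pipeline by which Theorems \ref{1711241959} and \ref{5-131325} were derived from Lemma \ref{5-121553}, using Lemma \ref{1712031229} as the local building block in place of Lemma \ref{5-121553}. Everything is the same universal-property diagram-chasing as before; the only genuinely new ingredient is the bookkeeping forced by the Hopf-compatibility condition and by the requirement that the universal object be commutative.

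First I would upgrade Lemma \ref{1712031229} to a functor, as in Lemma \ref{p2}. Work with the decorated analogue of $\b{D}$ whose objects are triples $D=(H_1,H_2,\{\delta_t\}_{t\in Q})$ carrying Hopf structures and whose morphisms $(g,f)$ have $g\in\b{H}(H_1,H_1')$ and $f\in\b{H}(H_2',H_2)$ subject to the $\b{D}$-conditions on $G$ and on the $\delta_t$'s. Given such a morphism, the composite $(f\ot\r{id})\tilde{\f{h}}^\ddag(D')g:H_1\to H_2\ot\tilde{\f{A}}^\ddag(D')$ must first be checked to be again a family of Hopf-algebra morphisms from $H_1$ to $H_2$ satisfying the support condition (ii); granting this, universality (iii) supplies a unique $\tilde{\f{A}}^\ddag(g,f)$ with $(f\ot\r{id})\tilde{\f{h}}^\ddag(D')g=(\r{id}\ot\tilde{\f{A}}^\ddag(g,f))\tilde{\f{h}}^\ddag(D)$, and, exactly as in Lemma \ref{p2}, uniqueness forces functoriality together with naturality of $\tilde{\f{h}}^\ddag$. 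Composing with the evident decorated analogue of the functor $\f{d}$ of (\ref{5-291249}) then yields $\f{A}^\ddag$ and $\f{h}^\ddag$ on the $(L,G)$-indexed categories, mirroring Theorem \ref{5-301727}.

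Then I would descend and pass to limits exactly as before. For part (i), with $H_2$ finite dimensional, the universal property shows that $\f{A}^\ddag(H_1,H_2)$ is, up to canonical isomorphism, independent of the chosen generating set of $H_1$ and basis of $H_2$; this gives the functor on $\b{H}\times\b{H}^\r{op}_\r{fd}$, and its values are commutative by Lemma \ref{1712031229}. For part (ii), with $H_1$ finitely generated, fix a finite generating set $G$ and a basis $V$ of $H_2$, index by the directed set of finite $L\subseteq V$, and assemble the pro-algebra $\f{A}^\ddag(H_1,H_2)=\{A_L\}$ with transition maps coming from $\f{A}^\ddag(\r{id},\r{id})$, just as in the proof of Theorem \ref{5-131325}. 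Given a pro-family $\psi$ represented by some $\psi:H_1\to H_2\ot U_i$, finiteness of $G$ yields an $L_\psi$ supporting all $\psi(b)$, and the local universal property produces the components of $\hat{\psi}$; one checks as in Theorem \ref{5-131325} that these are compatible and that $[\hat{\psi}]$ is the unique pro-morphism with $\psi=(\r{id}\ot[\hat{\psi}])\f{h}^\ddag(H_1,H_2)$. Since each $A_L$ is generated by the finitely many symbols attached to $t\in G$ and $s\in L$, and is commutative, the pro-algebra lands in $\b{proA}_\r{cfg}$.

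The main obstacle is the first check in the second paragraph: that the class of families of Hopf-algebra morphisms is stable under pre-composition with a Hopf morphism $g$ and post-composition with $f\ot\r{id}$, i.e. that the defining identity involving $\Delta_1,\Delta_2$, the flip $F$, and the multiplication $\mu$ is preserved. This is a multi-factor diagram chase that uses crucially the intertwining relations $\Delta_2 f=(f\ot f)\Delta_2'$ and $\Delta_1' g=(g\ot g)\Delta_1$; it is the analogue of, but more involved than, the corresponding step for the comodule families of Theorem \ref{1712031023}. I expect no difficulty with commutativity, since it is built into the local objects of Lemma \ref{1712031229} and is preserved by algebra morphisms and pro-limits; indeed the role of commutativity is precisely to make $\mu$ an algebra morphism, so that the defining identity is an identity of algebra (pro-)morphisms.
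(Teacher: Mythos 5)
Your proposal is correct and takes essentially the same approach the paper intends: the paper omits the proof, saying only that the theorem ``is concluded by applying this lemma as in Section \ref{1712031224}'', i.e.\ by rerunning the pipeline Lemma \ref{p2} $\to$ Theorem \ref{5-301727} $\to$ Theorems \ref{1711241959}/\ref{5-131325} with Lemma \ref{1712031229} as the local building block, which is exactly what you do. Your singling out of the stability of families of Hopf-algebra morphisms under composition with $(g,f)$ as the one genuinely new verification is apt, and that check does go through via the intertwining identities $\Delta_2 f=(f\ot f)\Delta_2'$ and $\Delta_1' g=(g\ot g)\Delta_1$ together with the span condition on the $\delta_t$'s.
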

In dual language, the content of Theorem \ref{1712031230} is stated as follows.
\begin{corollary}
\begin{enumerate}
\item[(i)] We have a canonical functor $\f{M}^\ddag:\b{G}_\r{fnt}^\r{op}\times\b{G}\to\b{S}_\r{c}$ such that for quantum group schemes
$G_2\in\b{G}_\r{fnt},G_1\in\b{G}$ there is a natural bijection between the set of homomorphisms from $G_2$ to $G_1$
and the set of points of $\f{M}^\ddag(G_2,G_1)$. Thus, $\f{M}^\ddag(G_2,G_1)$ is called the
\emph{scheme of homomorphisms from $G_2$ to $G_1$}.
\item[(ii)]  We have a canonical functor $\f{M}^\ddag:\b{G}^\r{op}\times\b{G}_{fd}\to\b{indS}_\r{cfd}$ such that for quantum group schemes
$G_2\in\b{G},G_1\in\b{G}_\r{fd}$ there is a natural bijection between the set of homomorphisms from $G_2$ to $G_1$
and the set of points of $\f{M}^\ddag(G_2,G_1)$. Thus, $\f{M}^\ddag(G_2,G_1)$ is called the
\emph{ind-scheme of homomorphisms from $G_2$ to $G_1$}.
\end{enumerate}
\end{corollary}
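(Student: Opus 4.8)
The plan is to read the Corollary as the dual (opposite-category) restatement of Theorem \ref{1712031230}, so that essentially nothing new has to be proved beyond tracking the subscript conventions and specializing the universal property at the parameter algebra $\m{K}$. Recall from the terminology that $\b{G}=\b{H}^\r{op}$, $\b{G}_\r{fnt}=\b{H}^\r{op}_\r{fd}$, $\b{G}_\r{fd}=\b{H}^\r{op}_\r{fg}$, $\b{S}_\r{c}=\b{A}^\r{op}_\r{c}$, and $\b{indS}_\r{cfd}=(\b{proA}_\r{cfg})^\r{op}$. Writing $G_1=\f{S}H_1$ and $G_2=\f{S}H_2$, I would simply \emph{define} $\f{M}^\ddag(G_2,G_1):=\f{S}\f{A}^\ddag(H_1,H_2)$ and, on morphisms, let $\f{M}^\ddag$ be the functor induced by $\f{A}^\ddag$ on the opposite categories (after the harmless interchange of the two factors). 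A direct check of domains and codomains then yields exactly the two functors asserted: for part (i), $\f{A}^\ddag:\b{H}\times\b{H}^\r{op}_\r{fd}\to\b{A}_\r{c}$ dualizes to $\f{M}^\ddag:\b{G}^\r{op}_\r{fnt}\times\b{G}\to\b{S}_\r{c}$, and for part (ii), $\f{A}^\ddag:\b{H}_\r{fg}\times\b{H}^\r{op}\to\b{proA}_\r{cfg}$ dualizes to $\f{M}^\ddag:\b{G}^\r{op}\times\b{G}_\r{fd}\to\b{indS}_\r{cfd}$.

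The heart of the matter is the bijection between group homomorphisms and points, and here I would specialize the universal property of Theorem \ref{1712031230} to the one-point parameter $R=\m{K}$. A homomorphism $G_2\to G_1$ in $\b{G}$ is by definition a Hopf-algebra morphism $\psi:H_1\to H_2$ in $\b{H}$; under the identification $H_2\ot\m{K}\cong H_2$, such a $\psi$ is precisely a \emph{family of Hopf-algebra morphisms from $H_1$ to $H_2$ with parameter algebra $\m{K}$}. On the other hand, a point of $\f{M}^\ddag(G_2,G_1)=\f{S}\f{A}^\ddag(H_1,H_2)$ is, by the definition of points, an algebra morphism $\f{A}^\ddag(H_1,H_2)\to\m{K}$ (a pro-morphism to $\m{K}$ in part (ii)). The universal property of the pair $(\f{A}^\ddag(H_1,H_2),\f{h}^\ddag(H_1,H_2))$ with $R=\m{K}$ therefore sets up the required one-to-one correspondence $\psi\leftrightarrow\hat\psi$ directly. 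Naturality in both arguments is inherited from the facts that $\f{h}^\ddag$ is a natural transformation and $\f{A}^\ddag$ is a functor, so the correspondence commutes with composition on either side; this is the standard representing-object argument.

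The one step with genuine content — the rest being formal dualization — is the verification that a family of Hopf-algebra morphisms over $R=\m{K}$ is the same datum as an ordinary morphism in $\b{H}$. Substituting $R=\m{K}$ into the compatibility identity
$$(\Delta_2\ot\r{id}_R)\psi=(\r{id}_{H_2\ot H_2}\ot\mu_R)(\r{id}_{H_2}\ot F\ot\r{id}_R)(\psi\ot\psi)\Delta_1$$
and using $\mu_\m{K}:\m{K}\ot\m{K}\cong\m{K}$ together with the canonical isomorphisms $H_2\ot\m{K}\cong H_2$ and $H_2\ot H_2\ot\m{K}\cong H_2\ot H_2$, the flip $F$ and $\mu_R$ become trivial and the right-hand side collapses to $(\psi\ot\psi)\Delta_1$, so the identity reduces to $\Delta_2\psi=(\psi\ot\psi)\Delta_1$, i.e. to the statement that the algebra morphism $\psi$ is also a coalgebra morphism. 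Since an algebra-and-coalgebra morphism between Hopf algebras automatically intertwines the antipodes, $\psi$ is then a genuine morphism in $\b{H}$, and conversely every such morphism is a family over $\m{K}$. I expect this collapse of the compatibility identity — and the accompanying bookkeeping of the tensor-unit isomorphisms — to be the only place where one must actually compute; everything else follows from Theorem \ref{1712031230} by passing to opposite categories.
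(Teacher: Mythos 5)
Your proposal is correct and follows exactly the route the paper intends: the paper gives no separate proof at all, introducing the corollary with ``In dual language, the content of Theorem \ref{1712031230} is stated as follows,'' so the content is precisely your dualization of categories ($\b{G}=\b{H}^\r{op}$, $\b{S}_\r{c}=\b{A}_\r{c}^\r{op}$, $\b{indS}_\r{cfd}=(\b{proA}_\r{cfg})^\r{op}$) together with the specialization of the universal property at $R=\m{K}$, which you carry out correctly. One small elision worth noting: when the compatibility identity collapses to $\Delta_2\psi=(\psi\ot\psi)\Delta_1$ you call $\psi$ ``a coalgebra morphism,'' but the paper's definition of a family imposes no counit condition, so counit preservation still needs an argument; it is in fact automatic, since $e:=\epsilon_2\psi:H_1\to\m{K}$ is an algebra morphism, hence convolution-invertible (with inverse $e\circ T_1$), while applying $\epsilon_2\ot\epsilon_2$ to the collapsed identity shows $e\ast e=e$, forcing $e=\epsilon_1$; the antipode is then automatic as you say.
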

\bibliographystyle{amsplain}

\end{document}